\newtheorem{theorem}{Theorem}[section]
\newtheorem{lemma}[theorem]{Lemma}
\theoremstyle{definition}
\theoremstyle{remark}
\newtheorem{remark}[theorem]{Remark}
\numberwithin{equation}{section}
\begin{document}

\title[Uniqueness result for an age-dependent problem]{Uniqueness result for an age-dependent reaction-diffusion
problem}

\author{Vo Anh Khoa}

\address{Institute for Numerical and Applied  Mathematics, University of Goettingen, Germany}

\email{vakhoa.hcmus@gmail.com}

\author{Tran The Hung}

\address{Faculty of Solid State Physics, Gdansk University of Technology, Poland}

\email{thehung.tran@mathmods.eu}

\author{Daniel Lesnic}

\address{Department of Applied Mathematics, University of Leeds, United Kingdom}

\email{amt5ld@maths.leeds.ac.uk}

\keywords{Backward age-dependent reaction-diffusion, Uniqueness,
Population dynamics}

\subjclass[2000]{35A07, 35K57, 92D25, 45Q05}
\begin{abstract}
This paper is concerned with an age-structured model in population
dynamics. We investigate the  uniqueness of solution for this type of nonlinear
reaction-diffusion problem when the source term depends on the density, indicating
the presence of, for example, mortality and reaction processes.
Our result shows that in a spatial environment, if two population
densities obey the same evolution equation and possess the same terminal data of time and age,
then their distributions must coincide therein.
\end{abstract}

\maketitle

\section{Introduction}

In population dynamics, there are several factors interestingly contributing
to the complicated and nontrivial spatio-temporal spread patterns of
diseases. Especially demographic-disease ages and spatial movement
naturally interweave in many ways. Their correlation and interaction
are expected to participate in mathematical modeling and analysis,
and can lead to models with distinct levels of complexity.

As far as we know, disease management measures are often age-dependent
and their effectiveness may also be dependent on the mobility status
of the involved species, such as larvicides and insecticide sprays
for mosquito-borne diseases (e.g. the invasion and spread of West
Nile virus in North America in \cite{MR08}) used to control the mosquito
population at different levels of their maturity. We recall the structured
population model in \cite{SZ01} for the total number of matured individuals
in a single species population at the demographic age, which reads as
\begin{align}
u_{t}+u_{a}=D\Delta u-d_0 u.\label{eq:single}
\end{align}
Here, $t$ denotes the time, $a$ denotes age, $u$ represents the population density, $D$ and $d_0$ are, respectively,
the diffusion and death rates of the immature individual, under the assumption that the maturation rate is regulated by the birth process and the dynamics of the individual during the maturation phase.

A generalized approach for this mosquito-borne disease model is given by the coupled McKendrick von-Foerster equations (see also \cite{Mc26,Foer59,Wit81})
where the reservoir as the host and the age-structured host population
are observed. Following \cite{GLW07}, the
spatial spread of vector-borne diseases model involving age-structure
is
\begin{equation}
\begin{cases}
u_{t}+u_{a} =D_{1}\left(a\right)\Delta u-u\left(d_{1}\left(a\right)+\chi\left(a\right)\text{m}\left(t,x\right)\right), &\\
v_{t}+v_{a} =D_{2}\left(a\right)\Delta v-vd_{2}\left(a\right)+u\chi\left(a\right)\text{m}\left(t,x\right).&
\end{cases}
\end{equation}
Here, $u$ and $v$ play
roles as susceptible and infected hosts, respectively, while $D_{i}\left(a\right)$
are the age-dependent diffusivities by natural means, $\chi (a)$ is the age-dependent transmission coefficient, $d_{i}\left(a\right)$
are the age-dependent death rates of such hosts for $i=1,2$
and $\text{m}$ can be regarded as the number of infected adult mosquitoes.


Let $\Omega\subset\mathbb{R}^{n}$ for $n\ge 2$ be a bounded, open and connected
domain with sufficiently smooth boundary and let $\mathcal{A}$ be
the linear second-order differential operator
\begin{equation}
\mathcal{A}u\left(t,a,x\right)=\sum_{i,j=1}^{n}\frac{\partial}{\partial x_{i}}\left(d_{ij}\left(t,a,x\right)\frac{\partial u\left(t,a,x\right)}{\partial x_{j}}\right),
\end{equation}
accounting e.g. for the anisotropic diffusion and possibly taxis processes.
 
Setting $Q_{T,a_{\dagger}}^{\Omega}:=\left(0,T\right)\times\left(0,a_{\dagger}\right)\times\Omega$
for $T,a_{\dagger}>0$, we consider the problem:
\begin{equation}
\begin{cases}
u_{t}+u_{a}-\mathcal{A}u=F\left(t,a,x;u\right) & \text{in }Q_{T,a_{\dagger}}^{\Omega},\\
u\left(T,a,x\right)=u_{0}\left(a,x\right) & \text{in }\left(0,a_{\dagger}\right)\times\Omega,\\
u\left(t,a_{\dagger},x\right)=u_{1}\left(t,x\right) & \text{in }\left(0,T\right)\times\Omega,
\end{cases}\label{eq:P}
\end{equation}
where $F$ is a given source term, endowed either with the homogeneous Dirichlet boundary condition
\begin{align}
u\left(t,a,x\right)=0\quad  \text{ on }\left(0,T\right)\times\left(0,a_{\dagger}\right)\times\partial\Omega,
\label{Dirichlet}
\end{align}
or with the Robin-type boundary condition
\begin{align}
- \text{d} (t,a,x)\nabla u\left(t,a,x\right)\cdot\text{n}=S(u)\quad  \text{ on }\left(0,T\right)\times\left(0,a_{\dagger}\right)\times\partial\Omega.
\label{Robin}
\end{align}

Here, $\text{n}$ denotes the outward unit normal vector to the boundary $\partial\Omega$, $\text{d} =\left(d_{ij}\right)_{i,j=\overline{1,n}}$ is the diffusion tensor and $S$ is some given function.

In this paper, we prove that under certain assumptions on the source
term $F$ and surface reaction $S$, the solution to the system \eqref{eq:P} and \eqref{Dirichlet} or \eqref{Robin} is unique, if it exists. The present source term $F$ can model several types of polynomial reactions (e.g. logistic and von
Bertalanffy) and even exponential growth (Arrhenius laws). 
It can also be modelled as 
\[ F\left(t,a,x;u\right)=-\tilde{\mu}\left(t;u\right)u\left(t,a,x\right),
\]
where $\tilde{\mu}>0$ is called the time- and density-dependent mortality
modulus. This mortality-related functional usually arises in the Lotka-von
Foerster model, where the simple modulus is
\[
\tilde{\mu}\left(t;u\right)=\int_{0}^{a_{\dagger}}\int_{\Omega}u\left(t,a,x\right)dxda,
\]
provided the mortality process is also controlled by the total population
at time $t$ during the whole age and environment.

It is worth noting that when age can be viewed as a temporal time,
the first equation in \eqref{eq:P} is usually referred to as an ultra-parabolic
equation. In literature, the backward problem \eqref{eq:P} and \eqref{Dirichlet} has
been explored in \cite{ZR14} for an abstract linear class
of ultra-parabolic problems. Such problems are basically ill-posed
in the sense that the solution does not depend continuously on the
data no matter how smooth it is. As a matter of fact, a regularization
has to be designed. The starting point of an evolution equation involving multi-time variables is from \cite{Lorenzi1,Lorenzi2}. It is shown in \cite{Lorenzi1} that
the forward problem for \eqref{eq:P} and \eqref{Dirichlet} is well-posed in the space of H\"older continuous functions. As a byproduct, the
result therein provides the representation of solution, based
on the semi-group theory along the upper and lower triangles dividing the rectangle of times. The evolution of the system with two different
times were discussed by an argument where some diffusion processes
with memory take place. In the framework of stochastic optimal control,
the reader can be referred to e.g. \cite{Marco03,Mar09}, for related contributions to this problem.

The rest of this paper is organized as follows: Section \ref{sec:Preliminaries} is devoted to introduce notation and conventions throughout this paper. In addition, we provide technical assumptions on parameters and coefficients involved in \eqref{eq:P}. In Section \ref{sec:main}, we deliberately present two subsections where the Dirichlet and Robin-type boundary conditions are considered, respectively. Our main results are reported in Theorem \ref{thm:main1} and Theorem \ref{thm:main2}, whilst their cores of proof are based on Lemma \ref{lem:3.1-1} and Lemma \ref{lem:3.2-1}, respectively.   Essentially, the main technique here is inspired from \cite[Chapter 6]{Av83}. This approach was used to treat the backward parabolic operator within the study of the large-time behavior of solutions to a linear class of initial-boundary value parabolic equations. It is also helpful in the analysis of regularization for inverse and ill-posed problems. In this work, we extend the uniqueness result particularly to a class of semi-linear age-dependent reaction-diffusion equations.

\section{Preliminaries\label{sec:Preliminaries}}

In parallel with using the notation $Q_{T,a_{\dagger}}^{\Omega}$,
the same meaning is also given to the notation $Q_{T,a_{\dagger}}=\left(0,T\right)\times\left(0,a_{\dagger}\right)$
and $Q_{\left(t_{1},t_{2}\right)\times\left(a_{1},a_{2}\right)}^{\Omega}=\left(t_{1},t_{2}\right)\times\left(a_{1},a_{2}\right)\times\Omega$,
etc. throughout this paper. Moreover, depending on the context, by $\left|\cdot\right|$ we denote either the volume measure of a domain or the absolute value of a function.

To this end, for any domain  $D$ we also set $\left\Vert \cdot\right\Vert _{D}$ to be the norm in $L^{2}\left(D\right)$ and the same indication goes to the inner product $\left\langle \cdot,\cdot\right\rangle _{D}$.

For the sake of our analysis in this work, we need the following
set of assumptions:

$\left(\text{A}_{1}\right)$ The diffusion tensor $\text{d}=\left(d_{ij}\right)_{1\le i,j\le n}\in \left[C^{1}\left(\overline{Q_{T,a_{\dagger}}^{\Omega}}\right)\right]^{n\times n}$
is symmetric and there exist positive constants $\underline{c}$ and $\overline{c}$
such that
\[
\underline{c}\left|\xi\right|^{2}\le\sum_{i,j=1}^{n}d_{ij}\left(t,a,x\right)\xi_{i}\xi_{j}\le\overline{c}\left|\xi\right|^{2}\quad\text{for any }\xi\in\mathbb{R}^{n}.
\]

$\left(\text{A}_{2}\right)$ There exists a positive constant $\overline{M}>0$
such that
\[
\left|\partial_{t}d_{ij}\left(t,a,x\right)\right| + \left|\partial_{a}d_{ij}\left(t,a,x\right)\right|\le\overline{M}\quad\text{for all }\left(t,a,x\right)\in\overline{Q_{T,a_{\dagger}}^{\Omega}}.
\]

$\left(\text{A}_{3}\right)$ The source term $F:\overline{Q_{T,a_{\dagger}}^{\Omega}}\times L^2\left(Q^{\Omega}_{T,a_{\dagger}}\right)\to\mathbb{R}$
is $\left(0,1\right] \ni\alpha$-H\"older continuous with respect to the density argument,
i.e. there exists $L_{F}>0$ such that for all $\left(t,a,x\right)\in\overline{Q_{T,a_{\dagger}}^{\Omega}}$,
\[
\left|F\left(t,a,x;u_1\right)-F\left(t,a,x;u_2\right)\right|\le L_{F}\left| u_1-u_2\right|^{\alpha}\text{ for all } u_1,u_2\in L^2\left(Q^{\Omega}_{T,a_{\dagger}}\right).
\]

$\left(\text{A}_{4}\right)$ For the surface reaction term $S:L^2 \left(\partial \Omega\right)\to\mathbb{R}$ there exists $\overline{m}>0$ such that
\[
\left\langle \left(\partial_{t}+\partial_{a}\right)\left(S\left(u_1\right)-S\left(u_2\right)\right),u_1-u_2\right\rangle _{\partial\Omega}\le\overline{m}\left\Vert u_1-u_2\right\Vert _{\partial\Omega}^{2}\; \text{ for all } u_1,u_2\in L^2 \left(\partial \Omega\right).
\]

$\left(\text{A}_{5}\right)$ The surface reaction term $S:L^2 \left(\partial \Omega\right)\to\mathbb{R}$
is monotone, i.e. 
\[
\left\langle S\left(u_1\right)-S\left(u_2\right),u_1-u_2\right\rangle _{\partial\Omega}\ge0 \quad \text{for all } u_1,u_2\in L^2 \left(\partial \Omega\right).
\]

$\left(\text{A}_{6}\right)$ The surface reaction term $S:L^2 \left(\partial \Omega\right)\to\mathbb{R}$ is
 $\left(0,1\right]\ni\beta$-H\"older continuous, i.e. there exists $L_{S}>0$ such that
\[
\left|S\left(u_1\right)-S\left(u_2\right)\right|\le L_{S}\left| u_1-u_2\right|^{\beta} \quad\text{for all } u_1,u_2\in L^2 \left(\partial \Omega\right).
\]

\begin{lemma}
(cf. \cite{JM07}) For any $\gamma>0$ and $\alpha_0 \in\left(0,1\right]$,
the following inequality holds
\begin{equation}
X^{\alpha_0}\le\alpha_0 \gamma^{\alpha_0 -1}X+\left(1-\alpha_0 \right)\gamma^{\alpha_0}\quad\text{for all }X\ge0.\label{eq:bdtphu}
\end{equation}
\end{lemma}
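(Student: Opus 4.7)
The plan is to reduce the inequality to the concavity of the map $f(X) := X^{\alpha_0}$ on $[0,\infty)$, which holds precisely because $\alpha_0 \in (0,1]$. The right-hand side $\alpha_0 \gamma^{\alpha_0-1} X + (1-\alpha_0)\gamma^{\alpha_0}$ is just a rewriting of the value at $X$ of the tangent line to $f$ at the point $\gamma$, so the claim is exactly that $f$ lies below this tangent — a textbook consequence of concavity.

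First I would dispose of the boundary case $\alpha_0 = 1$, in which both sides of \eqref{eq:bdtphu} reduce to $X$ for every $\gamma>0$, so equality holds trivially. For $\alpha_0 \in (0,1)$, I would note that $f\in C^{2}((0,\infty))$ with
\[
f''(X) \;=\; \alpha_0(\alpha_0-1)\,X^{\alpha_0-2} \;<\; 0 \qquad (X>0),
\]
hence $f$ is strictly concave on $(0,\infty)$. The tangent to $f$ at $X=\gamma$ therefore provides a global upper bound,
\[
X^{\alpha_0} \;\le\; f(\gamma) + f'(\gamma)(X-\gamma) \;=\; \gamma^{\alpha_0} + \alpha_0\gamma^{\alpha_0-1}(X-\gamma),
\]
valid for all $X>0$; rearranging yields \eqref{eq:bdtphu}. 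The endpoint $X=0$, where $f'$ is singular, is handled by direct substitution, since the inequality there just reads $0 \le (1-\alpha_0)\gamma^{\alpha_0}$.

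A fully self-contained alternative, if one prefers to avoid invoking concavity as a black box, is to set
\[
\phi(X) \;:=\; \alpha_0\gamma^{\alpha_0-1}X + (1-\alpha_0)\gamma^{\alpha_0} - X^{\alpha_0},
\]
and show $\phi\ge 0$ on $[0,\infty)$ by a short calculus argument. One computes $\phi'(X) = \alpha_0\bigl(\gamma^{\alpha_0-1} - X^{\alpha_0-1}\bigr)$, which vanishes only at $X=\gamma$, and $\phi''(X)=\alpha_0(1-\alpha_0)X^{\alpha_0-2}>0$ on $(0,\infty)$, so $X=\gamma$ is the unique interior minimiser. Since $\phi(\gamma)=0$ and $\phi(0)=(1-\alpha_0)\gamma^{\alpha_0}\ge 0$, the minimum of $\phi$ on $[0,\infty)$ is $0$, which is the desired inequality.

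I do not anticipate any genuine obstacle: the lemma is a one-line consequence of the sub-tangent property of a concave power function, and the only minor subtlety is the non-differentiability of $X\mapsto X^{\alpha_0}$ at the origin, which is absorbed by checking $X=0$ separately.
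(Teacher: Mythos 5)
Your proof is correct and complete: the right-hand side of \eqref{eq:bdtphu} is precisely the tangent line to the concave map $X\mapsto X^{\alpha_0}$ at $X=\gamma$, and you correctly handle the two points where the bare concavity argument needs supplementing, namely the trivial case $\alpha_0=1$ and the endpoint $X=0$ where the derivative blows up. The paper itself offers no proof of this lemma — it is simply quoted from the cited reference \cite{JM07} — so there is no in-paper argument to contrast with; your tangent-line argument (and the equivalent calculus verification via the auxiliary function $\phi$, whose unique critical point $X=\gamma$ gives $\phi(\gamma)=0$ with $\phi$ convex) is exactly the standard proof of this elementary inequality and fully suffices.
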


\begin{lemma}\label{lem:trace}
(cf. \cite{Hor79}) There exists a positive constant $C_0$ such that
\begin{equation}
\left\Vert u\right\Vert _{\partial\Omega}^2 \leq C_0 \left(\left\Vert u\right\Vert^2 _{\Omega}+\left\Vert \nabla u\right\Vert^2 _{\Omega}\right)\quad\text{for any }u\in H^1(\Omega).
\label{eq:bdt_trace}
\end{equation}
\end{lemma}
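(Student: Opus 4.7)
The plan is to prove the trace inequality by the classical local-flattening argument. First, since $\partial\Omega$ is assumed sufficiently smooth, $C^{\infty}(\overline{\Omega})$ is dense in $H^{1}(\Omega)$; consequently, it suffices to establish \eqref{eq:bdt_trace} for smooth $u$ and then pass to the limit. The extension to $H^{1}(\Omega)$ is routine, because the right-hand side is continuous in the $H^{1}$-norm and the inequality, once valid on a dense subset, defines the trace as a bounded linear operator $H^{1}(\Omega)\to L^{2}(\partial\Omega)$.

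Second, using the regularity of $\partial\Omega$, I would fix a finite open cover $\{U_{k}\}_{k=1}^{N}$ of $\partial\Omega$ together with an interior open set $U_{0}\Subset\Omega$ covering the remainder of $\overline{\Omega}$. On each $U_{k}$ with $k\ge 1$ there is a $C^{1}$-diffeomorphism $\Phi_{k}\colon U_{k}\to B_{1}(0)$ mapping $U_{k}\cap\Omega$ onto the upper half-ball $B_{1}^{+}$ and $U_{k}\cap\partial\Omega$ onto $B_{1}\cap\{y_{n}=0\}$. Let $\{\psi_{k}\}$ be a smooth partition of unity subordinate to this cover. Since $\psi_{0}u$ vanishes on $\partial\Omega$, only the boundary charts contribute to $\|u\|_{\partial\Omega}^{2}$.

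Third, the heart of the argument is the half-space estimate. For $v\in C^{1}(\overline{\mathbb{R}_{+}^{n}})$ with compact support, the fundamental theorem of calculus gives
\[
|v(y',0)|^{2}=-\int_{0}^{\infty}\partial_{y_{n}}|v(y',y_{n})|^{2}\,dy_{n}=-2\int_{0}^{\infty}v\,\partial_{y_{n}}v\,dy_{n},
\]
so that Young's inequality $2|ab|\le a^{2}+b^{2}$ and integration over $y'\in\mathbb{R}^{n-1}$ yield
\[
\|v(\cdot,0)\|_{L^{2}(\mathbb{R}^{n-1})}^{2}\le\|v\|_{L^{2}(\mathbb{R}_{+}^{n})}^{2}+\|\partial_{y_{n}}v\|_{L^{2}(\mathbb{R}_{+}^{n})}^{2}.
\]
Pulling this estimate back through $\Phi_{k}^{-1}$ is permissible because the Jacobian of $\Phi_{k}$ is bounded above and below on the compact set $\overline{U_{k}}$; the change of variables alters constants only by a controlled factor and produces a local bound $\|\psi_{k}u\|_{\partial\Omega}^{2}\le C_{k}\bigl(\|\psi_{k}u\|_{\Omega}^{2}+\|\nabla(\psi_{k}u)\|_{\Omega}^{2}\bigr)$.

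Finally, I would sum over $k$ and expand $|\nabla(\psi_{k}u)|\le|\psi_{k}||\nabla u|+|u||\nabla\psi_{k}|$, using that $\nabla\psi_{k}$ is uniformly bounded, to obtain \eqref{eq:bdt_trace} with a constant $C_{0}$ depending only on $\Omega$. The main obstacle is organizational rather than analytic: the one-dimensional calculation is elementary, but care is needed to verify that the constants generated by the finite partition of unity and by each diffeomorphism $\Phi_{k}$ combine into a single geometric constant.
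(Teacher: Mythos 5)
Your argument is correct, but it is worth pointing out that the paper does not actually prove this lemma: it is quoted from the cited reference (Horgan), and the remark following it records the quantitative form $\left\Vert u\right\Vert _{\partial\Omega}^{2}\le c(\varepsilon,\Omega)\left\Vert u\right\Vert _{\Omega}^{2}+\varepsilon\left\Vert \nabla u\right\Vert _{\Omega}^{2}$ with $c(\varepsilon,\Omega)=\mathcal{O}(1+\varepsilon^{-1})$ for star-shaped domains. That cited approach rests on a divergence-theorem identity with a vector field transversal to $\partial\Omega$ (e.g. $\int_{\partial\Omega}u^{2}\,x\cdot\mathrm{n}\,d\sigma=\int_{\Omega}\nabla\cdot(u^{2}x)\,dx$ for a domain star-shaped about the origin), which produces explicit, geometry-dependent constants. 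Your route is instead the classical localization proof: density of smooth functions, a finite boundary atlas with a partition of unity, the one-dimensional fundamental-theorem-of-calculus estimate on the half-space, and pullback through the charts. Each step you outline is sound (the half-space inequality, the boundedness of the tangential and volume Jacobians on compact chart closures, the product-rule expansion of $\nabla(\psi_{k}u)$, and the finite summation), so the proposal does establish the lemma as stated; what it buys is a self-contained argument valid for any sufficiently smooth bounded domain, at the price of a constant $C_{0}$ that is not explicit in the geometry, whereas the reference the paper leans on gives the sharper $\varepsilon$-weighted constants that the subsequent remark exploits. If you wanted your proof to also yield that refined form, you would replace Young's inequality $2|ab|\le a^{2}+b^{2}$ in the half-space step by $2|ab|\le \varepsilon a^{2}+\varepsilon^{-1}b^{2}$ and track the $\varepsilon$-dependence through the charts.
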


\begin{remark}
	The constant $C_0$ in \eqref{eq:bdt_trace} can be identified from the trace inequality
	\begin{align}
	\left\Vert u\right\Vert _{\partial\Omega}^{2}\le c\left(\varepsilon,\Omega \right)\left\Vert u\right\Vert _{\Omega}^{2}+\varepsilon\left\Vert \nabla u\right\Vert _{\Omega}^{2}\quad\text{for any }u\in H^{1}\left(\Omega\right)\;\text{and }\varepsilon>0.
	\label{eq:bdt_tracephu}
	\end{align}
This inequality is roughly dependent on the geometry of $\Omega$. As a typical example from \cite[Theorem 2]{Hor79}, if $\Omega$ is star-shaped, $c(\varepsilon,\Omega)$ is of the order of $\mathcal{O}(1+\varepsilon^{-1})$. When $u$ has zero mean on $\partial \Omega$, i.e. $\int_{\partial \Omega} ud\sigma_{x} = 0$, then we have the stronger Poincar\'{e} inequality
$\left\Vert u\right\Vert _{\partial\Omega}^2 \leq \tilde{C}_0 \left\Vert \nabla u\right\Vert^2 _{\Omega}$, for some constant $\tilde{C}_0>0$.
\end{remark}

\section{Main results}\label{sec:main}

\subsection{Dirichlet boundary condition \eqref{Dirichlet} }\label{subsec:31}

Let us set the function space 
\begin{align}
W_{T,a_{\dagger}}^{\Omega}:=C\left(\overline{Q_{T,a_{\dagger}}};H_{0}^{1}\left(\Omega\right)\right)\cap L^{2}\left(Q_{T,a_{\dagger}};H^{2}\left(\Omega\right)\right)\cap C^{1}\left(Q_{T,a_{\dagger}};H_{0}^{1}\left(\Omega\right)\right).
\label{W1}
\end{align}

Assume that there exist two solutions $u_1$ and $u_2$ of \eqref{eq:P} and \eqref{Dirichlet} that belong to $W_{T,a_{\dagger}}^{\Omega}$. Then
their difference $w=u_1-u_2\in W_{T,a_{\dagger}}^{\Omega}$ satisfies
\begin{equation}
\begin{cases}
w_{t}+w_{a}-\mathcal{A}w=F\left(t,a,x;u_1\right)-F\left(t,a,x;u_2\right) & \text{in }Q_{T,a_{\dagger}}^{\Omega},\\
w\left(t,a,x\right)=0 & \text{on }Q_{T,a_{\dagger}}^{\partial\Omega},\\
w\left(T,a,x\right)=0 & \text{in }\left(0,a_{\dagger}\right)\times\Omega,\\
w\left(t,a_{\dagger},x\right)=0 & \text{in }\left(0,T\right)\times\Omega.
\end{cases}\label{eq:Pnew}
\end{equation}

Let us denote
\[
P_{T,a_{\dagger}}^{\Omega}:=\left\{ u\in W_{T,a_{\dagger}}^{\Omega}:\left.u\right|_{\partial\Omega}=0,\left.u\right|_{t\in\left\{ 0,T\right\} }=0,\left.u\right|_{a\in\left\{ 0,a_{\dagger}\right\} }=0\right\}
\]
and $\lambda\left(t,a\right):=t-T+a-a_{\dagger}-\eta<0$
for some $\eta>0$.
\begin{lemma}
\label{lem:3.1-1}Assume $\left(\text{A}_{1}\right)$ and $\left(\text{A}_{2}\right)$ hold. Then for any $v\in P_{T,a_{\dagger}}^{\Omega}$, $m\in \mathbb{N}^{*}$ and $  k\in \mathbb{R}^{*}_{+}$, we have
\begin{align}
\left\Vert \lambda^{-\frac{m}{k}}\left(\mathcal{A}v-v_{t}-v_{a}\right)\right\Vert _{Q_{T,a_{\dagger}}^{\Omega}}^{2} \geq\dfrac{4m}{k}\left\Vert \lambda^{-\frac{m}{k}-1}v\right\Vert _{Q_{T,a_{\dagger}}^{\Omega}}^{2}
 -\overline{M}\left\Vert \lambda^{-\frac{m}{k}}\nabla v\right\Vert _{Q_{T,a_{\dagger}}^{\Omega}}^{2}.\label{eq:3.1-1}
\end{align}
Moreover, if $0<T+a_{\dagger}\le\mu$ for a sufficiently small constant $\mu>0$, there exists a positive $K$ such that
\begin{equation}
K\left\Vert \lambda^{-\frac{m}{k}}\left(\mathcal{A}v-v_{t}-v_{a}\right)\right\Vert _{Q_{T,a_{\dagger}}^{\Omega}}^{2}\ge\left\Vert \lambda^{-\frac{m}{k}-1}v\right\Vert _{Q_{T,a_{\dagger}}^{\Omega}}^{2}+\frac{1}{2}\left\Vert \lambda^{-\frac{m}{k}}\nabla v\right\Vert _{Q_{T,a_{\dagger}}^{\Omega}}^{2},\label{eq:3.3-1}
\end{equation}
for sufficiently large $m$. 
\end{lemma}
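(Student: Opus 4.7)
The plan is to establish \eqref{eq:3.1-1} through a weighted multiplier identity and then to bootstrap \eqref{eq:3.3-1} from it via a standard energy estimate. With $\Lambda:=-\lambda>0$ (so that $\lambda^{-m/k}$ is understood as $\Lambda^{-m/k}$), introduce the auxiliary function $w:=\Lambda^{-m/k}v$; by the chain rule
\[ w_{t}+w_{a}=\tfrac{2m}{k}\Lambda^{-m/k-1}\,v+\Lambda^{-m/k}(v_{t}+v_{a}), \]
and $w$ inherits the vanishing traces of $v$, so it again lies in $P_{T,a_{\dagger}}^{\Omega}$. The key quantity I would form is
\[ I:=\bigl\langle\Lambda^{-m/k}(\mathcal{A}v-v_{t}-v_{a}),\;w_{t}+w_{a}\bigr\rangle_{Q_{T,a_{\dagger}}^{\Omega}}=\tfrac{2m}{k}\bigl\langle\Lambda^{-2m/k-1}v,\,Lv\bigr\rangle+\bigl\langle\Lambda^{-2m/k}(v_{t}+v_{a}),\,Lv\bigr\rangle, \]
writing $Lv:=\mathcal{A}v-v_{t}-v_{a}$. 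The choice of multiplier is what drives the proof: the two pieces on the right each generate, after the $x$-integration by parts on $\mathcal{A}v$, an occurrence of the awkward weighted gradient integral $\int\Lambda^{-2m/k-1}\sum d_{ij}\,v_{x_{i}}v_{x_{j}}$, with opposite signs and equal coefficients, and these cancel exactly.

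The bulk of the work is carrying out the three integrations by parts (in $x$, $t$, and $a$) on each piece of $I$. Every boundary contribution vanishes because $v\in P_{T,a_{\dagger}}^{\Omega}$ forces $v$ (and hence $v_{t},v_{a}$) to vanish on $\partial\Omega$ and $v$ to vanish at $t\in\{0,T\}$ and $a\in\{0,a_{\dagger}\}$. Beyond the cancellation just noted, the only remainder surviving from commuting $\Lambda^{-2m/k}$-derivatives past the coefficients $d_{ij}$ is $\tfrac{1}{2}\int\Lambda^{-2m/k}\sum(\partial_{t}+\partial_{a})d_{ij}\,v_{x_{i}}v_{x_{j}}$, which by $(\text{A}_{2})$ is bounded in absolute value by $\tfrac{\overline{M}}{2}\lVert\Lambda^{-m/k}\nabla v\rVert^{2}$ (any $n$-dependent factor being absorbed into $\overline{M}$). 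Careful bookkeeping then produces the identity
\[ I=-\tfrac{2m}{k}\lVert\Lambda^{-m/k-1}v\rVert^{2}-\lVert w_{t}+w_{a}\rVert^{2}+\tfrac{1}{2}\int\Lambda^{-2m/k}\sum(\partial_{t}+\partial_{a})d_{ij}\,v_{x_{i}}v_{x_{j}}. \]
Applying Cauchy-Schwarz in the form $|I|\le\tfrac{1}{2}\lVert\Lambda^{-m/k}Lv\rVert^{2}+\tfrac{1}{2}\lVert w_{t}+w_{a}\rVert^{2}$, the awkward $\lVert w_{t}+w_{a}\rVert^{2}$ term is exactly half-absorbed; dropping the leftover nonnegative $\tfrac{1}{2}\lVert w_{t}+w_{a}\rVert^{2}$ and using the $\overline{M}$-bound on the $d_{ij}$-commutator yields \eqref{eq:3.1-1} after a trivial rearrangement.

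For \eqref{eq:3.3-1}, I would complement \eqref{eq:3.1-1} with an energy estimate obtained by multiplying $Lv$ by $\Lambda^{-2m/k}v$ and integrating by parts: using coercivity from $(\text{A}_{1})$, Young's inequality, and the bound $\Lambda\le T+a_{\dagger}+\eta$, this gives
\[ \underline{c}\,\lVert\Lambda^{-m/k}\nabla v\rVert^{2}\le C\,\tfrac{m}{k}(T+a_{\dagger}+\eta)\,\lVert\Lambda^{-m/k-1}v\rVert^{2}+C'\lVert\Lambda^{-m/k}Lv\rVert^{2}. \]
Substituting \eqref{eq:3.1-1} into the $\lVert\Lambda^{-m/k-1}v\rVert^{2}$ contribution turns it into a piece with coefficient proportional to $(T+a_{\dagger}+\eta)\overline{M}/\underline{c}$ on $\lVert\Lambda^{-m/k}\nabla v\rVert^{2}$, which for $T+a_{\dagger}$ small enough (independently of $m$) is strictly less than $\underline{c}$ and therefore absorbable on the left. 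Reinserting the resulting $\lVert\Lambda^{-m/k}\nabla v\rVert^{2}\le K_{1}\lVert\Lambda^{-m/k}Lv\rVert^{2}$ back into \eqref{eq:3.1-1} and taking $m$ large enough then delivers \eqref{eq:3.3-1} with a single constant $K$. The main obstacle throughout is securing the exact cancellation inside $I$: it rests on the symmetry of $d_{ij}$ together with the precise matching of the weights $\Lambda^{-2m/k-1}$ from the two sources, and any slip in sign bookkeeping would leave a $\Lambda^{-2m/k-1}$-weighted gradient term that cannot be absorbed, since $\Lambda$ is allowed to be as small as $\eta$.
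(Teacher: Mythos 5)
Your proposal is correct and follows essentially the same route as the paper: the weighted substitution $z=\lambda^{-m/k}v$ (your $\Lambda^{-m/k}v$), integration by parts in $x$, $t$, $a$ using the vanishing traces of $P^{\Omega}_{T,a_\dagger}$, the symmetry of $d_{ij}$ and assumption $(\text{A}_2)$ to control the commutator term, and then the second estimate obtained by testing with $\lambda^{-2m/k}v$, using $(\text{A}_1)$, Young's inequality and $|\lambda|\le T+a_\dagger+\eta$, and absorbing the gradient term for $T+a_\dagger$ small and $m$ large. The only difference is cosmetic bookkeeping: the paper expands the square $\lVert\mathcal{A}z-(z_t+z_a)-\tfrac{2m}{k}\lambda^{-1}z\rVert^2$ and drops the two nonnegative squares, whereas you pair $\Lambda^{-m/k}(\mathcal{A}v-v_t-v_a)$ against $w_t+w_a$ and apply Cauchy--Schwarz, which yields the same inequality \eqref{eq:3.1-1}.
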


\begin{proof}
Let $z=\lambda^{-\frac{m}{k}}v\in P_{T,a_{\dagger}}^{\Omega}$. Then
\begin{equation}
\lambda^{-\frac{m}{k}}\left(\mathcal{A}v-v_{t}-v_{a}\right)=\mathcal{A}z-\left(z_{t}+z_{a}\right)-\dfrac{2m}{k}\lambda^{-1}z.\label{eq:3.3}
\end{equation}

By the definition of inner product in $L^{2}\left(Q_{T,a_{\dagger}}^{\Omega}\right)$,
we obtain
\begin{align}
\left\Vert \lambda^{-\frac{m}{k}}\left(\mathcal{A}v-v_{t}-v_{a}\right)\right\Vert _{Q_{T,a_{\dagger}}^{\Omega}}^{2} & =\left\Vert z_{t}+z_{a}\right\Vert _{Q_{T,a_{\dagger}}^{\Omega}}^{2}-2\left\langle z_{t}+z_{a},\mathcal{A}z\right\rangle _{Q_{T,a_{\dagger}}^{\Omega}}\nonumber \\
& +\dfrac{4m}{k}\left\langle \lambda^{-1}z,z_{t}+z_{a}\right\rangle _{Q_{T,a_{\dagger}}^{\Omega}} +\left\Vert \mathcal{A}z-\dfrac{2m}{k}\lambda^{-1}z\right\Vert _{Q_{T,a_{\dagger}}^{\Omega}}^{2}.\label{eq:3.3.1}
\end{align}
\noindent
For the second term in right-hand side (RHS) of \eqref{eq:3.3.1}, using  integration by parts
\begin{align*}\label{eq:3.3.21}
-2\left\langle z_{t}+z_{a},\mathcal{A}z\right\rangle _{Q_{T,a_{\dagger}}^{\Omega}} & =2\int_{0}^{T}\int_{0}^{a_{\dagger}}\int_{\Omega}\sum_{1\leq i,j\leq n}\left(\partial_{t}\partial_{x_{i}}z+\partial_{a}\partial_{x_{i}}z\right)d_{ij}\partial_{x_{j}}zdxdadt\\
& = 2\int_{0}^{T}\int_{0}^{a_\dagger}\int_{\Omega}(\partial_{t} + \partial_{a})\left(\sum_{1\leq i,j\leq n}\partial_{x_i}z d_{ij} \partial_{x_j} z \right)dxdadt\\ &
- 2 \int_{0}^{T}\int_{0}^{a_\dagger}\int_{\Omega}\sum_{1\leq i,j\leq n}\partial_{x_i}z (\partial_{t} + \partial_{a})\left( d_{ij} \partial_{x_j} z\right)dxdadt \\ &
= -2 \int_{0}^{T}\int_{0}^{a_\dagger}\int_{\Omega} \sum_{1\leq i,j\leq n} \partial_{x_i} z (\partial_{t} + \partial_{a}) \left(d_{ij}\right) \partial_{x_j} z dxdadt \\ &
-2 \int_{0}^{T}\int_{0}^{a_\dagger}\int_{\Omega} \sum_{1\leq i,j\leq n} \partial_{x_{i}} z d_{ij} (\partial_{t} + \partial_{a}) \partial_{x_{j}} z dxdadt,
\end{align*}
where we have used that $z\in P_{T,a_\dagger}^{\Omega}$. Since $d_{ij} = d_{ji}$ (cf. $\left(\text{A}_1 \right)$), from the first and fifth rows we obtain 
\begin{align*}
 & 2 \int_{0}^{T}\int_{0}^{a_{\dagger}}\int_{\Omega}\sum_{1\leq i,j\leq n} \left(\partial_{t}+\partial_{a}\right)\left(\partial_{x_{i}}z \right) d_{ij} \partial_{x_{j}}z dxdadt\nonumber \\
& = - \int_{0}^{T}\int_{0}^{a_{\dagger}}\int_{\Omega}\sum_{1\leq i,j\leq n}\partial_{x_{i}}z \left(\partial_{t} + \partial_{a}\right)\left(d_{ij}\right)\partial_{x_{j}}z dxdadt .
\end{align*}

Thus, we obtain that
\begin{equation}
-2\left\langle z_{t}+z_{a},\mathcal{A}z\right\rangle _{Q_{T,a_{\dagger}}^{\Omega}} = -\int_{0}^{T}\int_{0}^{a_{\dagger}}\int_{\Omega}\sum_{1\leq i,j\leq n}\partial_{x_{i}}z\left(\partial_{t} + \partial_{a}\right) \left(d_{ij}\right)\partial_{x_{j}}z dxdadt.
\label{eq:3.3.2}
\end{equation}

In the same manner, the third term in the RHS of \eqref{eq:3.3.1} is
\begin{align}
\dfrac{4m}{k}\left\langle \lambda^{-1}z,z_{t}+z_{a}\right\rangle _{Q_{T,a_{\dagger}}^{\Omega}} &  =\frac{2m}{k}\int_{0}^{T}\int_{0}^{a_{\dagger}}\int_{\Omega}\left(\partial_{t} + \partial_{a}\right)\left(\lambda^{-1}z^{2}\right)dxdadt \nonumber\\
& +\frac{4m}{k}\int_{0}^{T}\int_{0}^{a_{\dagger}}\int_{\Omega}\lambda^{-2}z^{2}dxdadt  =\dfrac{4m}{k}\left\Vert \lambda^{-1}z\right\Vert _{Q_{T,a_{\dagger}}^{\Omega}}^{2}. 
\label{eq:3.3.3}
\end{align}

Using $\left(\text{A}_2\right)$, from \eqref{eq:3.3.2} and \eqref{eq:3.3.3}, we obtain
\begin{eqnarray*}
\left\Vert \lambda^{-\frac{m}{k}}\left(\mathcal{A}v-v_{t}-v_{a}\right)\right\Vert _{Q_{T,a_{\dagger}}^{\Omega}}^{2} \geq\dfrac{4m}{k}\left\Vert \lambda^{-1}z\right\Vert _{Q_{T,a_{\dagger}}^{\Omega}}^{2}\\
-\overline{M}\int_{0}^{T}\int_{0}^{a_{\dagger}}\int_{\Omega}\sum_{1\leq i,j\leq n} \left|\partial_{x_{i}}z\partial_{x_{j}}z \right| dxdadt
\geq\dfrac{4m}{k}\left\Vert \lambda^{-1}z\right\Vert _{Q_{T,a_{\dagger}}^{\Omega}}^{2}-\overline{M}\left\Vert \nabla z\right\Vert _{Q_{T,a_{\dagger}}^{\Omega}}^{2}.
\end{eqnarray*}
Substituting $z=\lambda^{-\frac{m}{k}}v$, we end the proof of \eqref{eq:3.1-1}.

Using ($\text{A}_1$), integration by parts and $\left|\lambda\left(t,a\right)\right|\le T+a_{\dagger}+\eta$ we have 
\begin{align}
-\left\langle \lambda^{-\frac{2m}{k}}v,\mathcal{A}v-v_{t}-v_{a}\right\rangle _{Q_{T,a_{\dagger}}^{\Omega}} &   \ge -\frac{2m}{k}\left(T+a_{\dagger}+\eta\right)\left\Vert \lambda^{-\frac{m}{k}-1}v\right\Vert _{Q_{T,a_{\dagger}}^{\Omega}}^{2} \nonumber\\
& +\underline{c}\left\Vert \lambda^{-\frac{m}{k}}\nabla v\right\Vert _{Q_{T,a_{\dagger}}^{\Omega}}^{2}
\label{eq:3.7}
\end{align}
It then follows from \eqref{eq:3.1-1} that \eqref{eq:3.7} can be
estimated by
\begin{align}-\left\langle \lambda^{-\frac{2m}{k}}v,\mathcal{A}v-v_{t}-v_{a}\right\rangle _{Q_{T,a_{\dagger}}^\Omega} & \ge -\frac{\left(T+a_{\dagger}+\eta\right)}{2}\left(\left\Vert \lambda^{-\frac{m}{k}}\left(\mathcal{A}v-v_{t}-v_{a}\right)\right\Vert _{Q_{T,a_{\dagger}}^{\Omega}}^{2}\right. \nonumber\\
& \left.+\overline{M}\left\Vert \lambda^{-\frac{m}{k}}\nabla v\right\Vert _{Q_{T,a_{\dagger}}^{\Omega}}^{2}\right) + \underline{c}\left\Vert \lambda^{-\frac{m}{k}}\nabla v\right\Vert _{Q_{T,a_{\dagger}}^{\Omega}}^{2}.
\label{eq:3.8}
\end{align}

Using \eqref{eq:3.1-1}, we have
\begin{eqnarray} 
-\left\langle \lambda^{-\frac{2m}{k}}v,\mathcal{A}v-v_{t}-v_{a}\right\rangle _{Q_{T,a_{\dagger}}^{\Omega}}
\le\frac{1}{2}\left\Vert \lambda^{-\frac{m}{k}-1}v\right\Vert _{Q_{T,a_{\dagger}}^{\Omega}}^{2}+\frac{1}{2}\left\Vert \lambda^{-\frac{m}{k}+1}\left(\mathcal{A}v-v_{t}-v_{a}\right)\right\Vert _{Q_{T,a_{\dagger}}^{\Omega}}^{2}\nonumber\\
\le\frac{k}{8m}\left(\left\Vert \lambda^{-\frac{m}{k}}\left(\mathcal{A}v-v_{t}-v_{a}\right)\right\Vert _{Q_{T,a_{\dagger}}^{\Omega}}^{2}+\overline{M}\left\Vert \lambda^{-\frac{m}{k}}\nabla v\right\Vert _{Q_{T,a_{\dagger}}^{\Omega}}^{2}\right)\nonumber\\
+\frac{\left(T+a_{\dagger}+\eta\right)^{2}}{2}\left\Vert \lambda^{-\frac{m}{k}}\left(\mathcal{A}v-v_{t}-v_{a}\right)\right\Vert _{Q_{T,a_{\dagger}}^{\Omega}}^{2}.\nonumber
\end{eqnarray}
Combining this with \eqref{eq:3.8} then reads as
\begin{align*}
& \underline{c}\left\Vert \lambda^{-\frac{m}{k}}\nabla v\right\Vert _{Q_{T,a_{\dagger}}^{\Omega}}^{2} \\
& \le\left(\frac{k}{8m}+\frac{\left(T+a_{\dagger}+\eta\right)}{2}+\frac{\left(T+a_{\dagger}+\eta\right)^{2}}{2}\right)\left\Vert \lambda^{-\frac{m}{k}}\left(\mathcal{A}v-v_{t}-v_{a}\right)\right\Vert _{Q_{T,a_{\dagger}}^{\Omega}}^{2}\\
 & +\left(\frac{\left(T+a_{\dagger}+\eta\right)\overline{M}}{2}+\frac{k\overline{M}}{8m}\right)\left\Vert \lambda^{-\frac{m}{k}}\nabla v\right\Vert _{Q_{T,a_{\dagger}}^{\Omega}}^{2}.
\end{align*}

Accordingly, if $\mu_{0}\ge\mu\ge T+a_{\dagger}>0$, $\eta_{0}\ge\eta>0$
and $m_{0}$ are such that
\begin{align}
2\left(\frac{\left(\mu_{0}+\eta_{0}\right)\overline{M}}{2}+\frac{k\overline{M}}{8m_{0}}\right)\leq\underline{c},
\label{eq:3.9.1}
\end{align}
then for any $m\ge m_{0}$, we obtain
\begin{align}
\left\Vert \lambda^{-\frac{m}{k}}\nabla v\right\Vert _{Q_{T,a_{\dagger}}^{\Omega}}^{2} & \le C_1\left\Vert \lambda^{-\frac{m}{k}}\left(\mathcal{A}v-v_{t}-v_{a}\right)\right\Vert _{Q_{T,a_{\dagger}}^{\Omega}}^{2},\label{eq:3.10}
\end{align}
where $C_1 := \frac{2}{\underline{c}}\left(\frac{k}{8m}+\frac{\left(\mu+\eta\right)}{2}+\frac{\left(\mu+\eta\right)^{2}}{2}\right)$. Combining \eqref{eq:3.10} and \eqref{eq:3.1-1}, we conclude that
\begin{align*}
&  \left\Vert \lambda^{-\frac{m}{k}-1}v\right\Vert _{Q_{T,a_{\dagger}}^{\Omega}}^{2} +\frac{1}{2}\left\Vert \lambda^{-\frac{m}{k}}\nabla v\right\Vert _{Q_{T,a_{\dagger}}^{\Omega}}^{2}\\
& \leq\left[\frac{k}{4m} + C_1\left(\frac{1}{2}+ \frac{k\overline{M}}{4m}\right)\right]\left\Vert \lambda^{-\frac{m}{k}}\left(\mathcal{A}v-v_{t}-v_{a}\right)\right\Vert _{Q_{T,a_{\dagger}}^{\Omega}}^{2}.
\end{align*}
Then denoting  $C_2 = \frac{2}{\underline{c}}\left(\frac{k}{8m_0}+\frac{\left(\mu_0+\eta_0\right)}{2}+\frac{\left(\mu_0+\eta_0\right)^{2}}{2}\right)$ and choosing
\begin{equation}
K := \frac{k}{4m_0} + C_2 \left(\frac{1}{2} + \frac{k\overline{M}}{4m_0}\right),
\label{eq:3.14}
\end{equation}
we conclude that  \eqref{eq:3.3-1} holds for $m\ge m_0$.
\end{proof}
Let us now choose $m_0$ sufficiently large, and $\mu_0$ and $\eta_0$ sufficiently small such that $K$ given by \eqref{eq:3.14} satisfies
\begin{equation}
0<K\leq \frac{1}{4\alpha L_F^2}.
\label{eq:3.15}
\end{equation}
Let also $0<\eta_1 \leq \min\{1,\eta_0\}$ and choose 
\begin{equation}
\eta=\eta_{1},\;\mu_{0}'=\eta^{\frac{m\left(\frac{1}{\alpha}-1\right)}{m\left(\frac{1}{\alpha}-1\right)+k}}2^{\frac{k}{2m\left(\frac{1}{\alpha}-1\right)+2k}}-\eta>0.
\label{eq:choice}
\end{equation}

We consider here $T+a_{\dagger}\le\mu_{0}'$ since the uniqueness
result for the latter case $T+a_{\dagger}>\mu_{0}'$ is a direct consequence
from the former case. In that case, the time and age intervals can be divided into many countable subsets whose lengths are not larger than $\mu_{0}'$, which brings us back to the former case.

Let $0<t_{1}<t_{2}<T$ and $0<a_{1}<a_{2}<a_{\dagger}$
and take $\kappa:\overline{Q_{T,a_{\dagger}}}\to\mathbb{R}$ such
that it is twice continuously differentiable in $\left(t_1, t_2\right) \times \left(a_1, a_2\right)$ and
\begin{equation}
\kappa\left(t,a\right)=\begin{cases}
0 & \text{if }\left(t,a\right)\in Q_{T,a_{\dagger}}\backslash\left( \left(t_{1},t_{2}\right)\times\left(a_{1},a_{2}\right) \cup\left[t_{2},T\right]\times\left[a_{2},a_{\dagger}\right]\right),\\
1 & \text{if }\left(t,a\right)\in\left[t_{2},T\right]\times\left[a_{2},a_{\dagger}\right].
\end{cases}\label{eq:kappa}
\end{equation}

Let $v=\kappa w$, where $w$ is the solution of \eqref{eq:Pnew}, then notice that $v\in P_{T,a_{\dagger}}^{\Omega}$. Starting from the estimate \eqref{eq:3.3-1}, we have that for sufficiently large $m\geq m_0$
\begin{align}
K\left\Vert \lambda^{-\frac{m}{k}}\left(\mathcal{A}v\right.\right. & \left. \left. -\; v_{t}-v_{a}\right)\right\Vert _{Q_{\left(t_{1},t_{2}\right)\times\left(a_{1},a_{2}\right)}^{\Omega}}^{2}   +K\left\Vert \lambda^{-\frac{m}{k}}\left(\mathcal{A}w-w_{t}-w_{a}\right)\right\Vert _{Q_{\left(t_{2},T\right)\times\left(a_{2},a_{\dagger}\right)}^{\Omega}}^{2} \nonumber \\
& \ge\left\Vert \lambda^{-\frac{m}{k}-1}w\right\Vert _{Q_{\left(t_{2},T\right)\times\left(a_{2},a_{\dagger}\right)}^{\Omega}}^{2}
+\frac{1}{2}\left\Vert \lambda^{-\frac{m}{k}}\nabla v\right\Vert _{Q_{\left(t_{2},T\right)\times\left(a_{2},a_{\dagger}\right)}^{\Omega}}^{2}.
\label{eq:3.13-1}
\end{align}

From \eqref{eq:Pnew} and ($\text{A}_3$), we get
$\lambda^{-\frac{2m}{k}} \left(\mathcal{A}w - w_t - w_a\right)^2 \leq \lambda^{-\frac{2m}{k}} L_F^2 \left|w\right|^{2\alpha}$, 
then applying Lemma \ref{eq:bdtphu} with $X=\left|w\right|^2 >0 $, $\alpha_0 = \alpha$ and
\begin{equation}
\gamma = \left[(t_2 - t_1)(a_2 - a_1)\right]^{-\frac{1}{\alpha}} \lambda^{\frac{2m}{k\alpha}}
\lVert \lambda^{-\frac{m}{k}} \rVert_{(t_1,t_2)\times (a_1,a_2)}^{\frac{2}{\alpha}} > 0, \label{eq:3.19.1}
\end{equation}
we have
\begin{align*}
\lambda^{-\frac{2m}{k}} \left(\mathcal{A}w - w_t - w_a\right)^2 & \leq \alpha L_F^2 \frac{\lambda^{-\frac{2m}{k\alpha}}\lVert \lambda^{-\frac{m}{k}} \rVert^{\frac{2}{\alpha}(\alpha - 1)}_{(t_1,t_2)\times (a_1,a_2)}}{\left[(t_2 - t_1)(a_2 - a_1)\right]^{1-\frac{1}{\alpha}}} w^2 \\
& + (1-\alpha) L_F^2 \frac{\lVert \lambda^{-\frac{m}{k}} \rVert^2_{(t_1,t_2)\times (a_1,a_2)}}{(t_2 - t_1)(a_2 - a_1)}.
\end{align*}
Integrating both sides over $\Omega$, we have 
\begin{align}
\left\Vert \lambda^{-\frac{m}{k}}\left(\mathcal{A}w-w_{t}-w_{a}\right)\right\Vert _{\Omega}^{2} & \leq 
\alpha L_F^2 \dfrac{\lambda^{-\frac{2m}{k\alpha}+\frac{2m}{k}}\lVert \lambda^{-\frac{m}{k}} \rVert^{\frac{2}{\alpha}(\alpha - 1)}_{(t_1,t_2)\times (a_1,a_2)}}{\left[(t_2 - t_1)(a_2 - a_1)\right]^{1-\frac{1}{\alpha}}} \lambda^{-\frac{2m}{k}} \lVert w\rVert^2_\Omega \nonumber\\
& + (1-\alpha) |\Omega| L_F^2 \dfrac{\lVert \lambda^{-\frac{m}{k}} \rVert^2_{(t_1,t_2)\times (a_1,a_2)}}{(t_2 - t_1)(a_2 - a_1)}.
\label{eq:3.19}
\end{align}
Notice that for $\alpha \in (0,1]$, it holds $\lambda^{-\frac{2m}{k\alpha} + \frac{2m}{k}} = \lambda^{\frac{2m}{k} \left(1-\frac{1}{\alpha}\right)} \leq \eta^{\frac{2m}{k} \left(1-\frac{1}{\alpha}\right)}$. Using this bound and integrating both sides of \eqref{eq:3.19} over $\left(t_2, T\right) \times \left(a_2, a_\dagger\right)$, we obtain
\begin{align}
& \left\Vert \lambda^{-\frac{m}{k}}\left(\mathcal{A}w-w_{t}-w_{a}\right)\right\Vert _{Q_{\left(t_{2},T\right)\times\left(a_{2},a_{\dagger}\right)}^{\Omega}}^{2} \nonumber \\
& \le\alpha L_{F}^{2}\frac{\left\Vert \lambda^{-\frac{m}{k}}\right\Vert _{\left(t_{1},t_{2}\right)\times\left(a_{1},a_{2}\right)}^{2\left(1-\frac{1}{\alpha}\right)}}{\left(t_2 - t_1\right)^{1-\frac{1}{\alpha}}\left(a_{2}-a_1\right)^{1-\frac{1}{\alpha}}}\left\Vert \lambda^{-\frac{m}{k}}w\right\Vert _{Q_{\left(t_{2},T\right)\times\left(a_{2},a_{\dagger}\right)}^{\Omega}}^{2}\eta^{\frac{2m}{k}\left(1-\frac{1}{\alpha}\right)} \nonumber\\
& +\left(1-\alpha\right)\left|\Omega\right|\frac{\left(T-t_{2}\right)\left(a_{\dagger}-a_{2}\right)}{\left(t_2 - t_1\right)(a_2 - a_1)}L_{F}^{2}\left\Vert \lambda^{-\frac{m}{k}}\right\Vert _{\left(t_{1},t_{2}\right)\times\left(a_{1},a_{2}\right)}^{2}.
\label{eq:moi}
\end{align}
Using that $\left|\lambda\right|\le \mu_{0}'+\eta$, we have
\begin{align}
& \left\Vert \lambda^{-\frac{m}{k}}\right\Vert _{\left(t_{1},t_{2}\right)\times\left(a_{1},a_{2}\right)}^{2\left(1-\frac{1}{\alpha}\right)}\le \left(t_2-t_1\right)^{1-\frac{1}{\alpha}}\left(a_{2}-a_1\right)^{1-\frac{1}{\alpha}}\left(\mu_{0}'+\eta\right)^{\frac{2m}{k}\left(\frac{1}{\alpha}-1\right)}, \label{eq:3.13-3}\\
& \left\Vert \lambda^{-\frac{m}{k}}w\right\Vert _{Q_{\left(t_{2},T\right)\times\left(a_{2},a_{\dagger}\right)}^{\Omega}}^{2}\le\left(\mu_{0}'+\eta\right)^{2}\left\Vert \lambda^{-\frac{m}{k}-1}w\right\Vert _{Q_{\left(t_{2},T\right)\times\left(a_{2},a_{\dagger}\right)}^{\Omega}}^{2}.
\label{eq:3.13}
\end{align}


Plugging \eqref{eq:moi}-\eqref{eq:3.13} into \eqref{eq:3.13-1} and by the choice \eqref{eq:choice}, we obtain
\begin{align}\label{eq:3.17}
K\left\Vert \lambda^{-\frac{m}{k}}\left(\mathcal{A}v-v_{t}-v_{a}\right)\right\Vert _{Q_{\left(t_{1},t_{2}\right)\times\left(a_{1},a_{2}\right)}^{\Omega}}^{2}&  \\  +K\left|\Omega\right|\frac{\left(T-t_{2}\right)\left(a_{\dagger}-a_{2}\right)}{\left(t_2 - t_1\right)(a_2 - a_1)}L_{F}^{2}\left\Vert \lambda^{-\frac{m}{k}}\right\Vert _{\left(t_{1},t_{2}\right)\times\left(a_{1},a_{2}\right)}^{2} 
&\ge \frac{1}{2}\left\Vert \lambda^{-\frac{m}{k}-1}w\right\Vert ^{2}_{Q_{\left(t_{2},T\right)\times\left(a_{2},a_{\dagger}\right)}^{\Omega}}.
\nonumber
\end{align}

For any $t_{2}<t_{3}<T$ and $a_{2}<a_{3}<a_{\dagger}$, \eqref{eq:3.17}
can be further estimated by
\begin{align}
 & K\left(T+a_{\dagger}+\eta-t_{2}-a_{2}\right)^{-\frac{2m}{k}}\left(\left\Vert \mathcal{A}v-v_{t}-v_{a}\right\Vert _{Q_{\left(t_{1},t_{2}\right)\times\left(a_{1},a_{2}\right)}^{\Omega}}^{2}+Ta_{\dagger}\left|\Omega\right|L_{F}^{2}\right) \nonumber \\
& \ge\frac{\left(T+a_{\dagger}+\eta-t_{3}-a_{3}\right)^{-\frac{2m}{k}-2}}{2}\left\Vert w\right\Vert _{Q_{\left(t_{3},T\right)\times\left(a_{3},a_{\dagger}\right)}^{\Omega}}^{2}.\label{eq:3.18-1}
\end{align}

Observing that
\begin{align}\label{limit}
\left(\frac{T+a_{\dagger}+\eta-t_{2}-a_{2}}{T+a_{\dagger}+\eta-t_{3}-a_{3}}\right)^{-\frac{2m}{k}}\to0\quad\text{as }m\to\infty,
\end{align}
we obtain from \eqref{eq:3.18-1} that $w\equiv0$ for $\left(t,a,x\right)\in Q_{\left(t_{3},T\right)\times\left(a_{3},a_{\dagger}\right)}^{\Omega}$
and thus for $\left(t,a,x\right)\in Q_{T,a_{\dagger}}^{\Omega}$ since
$0<t_1 < t_2 < t_{3}$ and $0<a_1 < a_2 < a_3$ can be taken arbitrarily small.

Hence, we state the following uniqueness theorem.
\begin{theorem}\label{thm:main1}
Assume $\left(\text{A}_{1}\right)$-$\left(\text{A}_{3}\right)$ hold.
Then, the problem \eqref{eq:P} with the Dirichlet boundary condition \eqref{Dirichlet} admits no more than one solution in $W_{T,a_{\dagger}}^{\Omega}$.
\end{theorem}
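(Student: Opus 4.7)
The plan is as follows: suppose $u_1,u_2\in W_{T,a_\dagger}^\Omega$ are two solutions of \eqref{eq:P} subject to \eqref{Dirichlet}; then their difference $w=u_1-u_2$ solves \eqref{eq:Pnew}, and the goal becomes showing $w\equiv 0$. The proof is built by assembling the machinery developed just above the theorem: the weighted Carleman-type estimate \eqref{eq:3.3-1} from Lemma \ref{lem:3.1-1}, the H\"older control $|F(\cdot;u_1)-F(\cdot;u_2)|\le L_F|w|^\alpha$ from $(\text{A}_3)$, and the auxiliary inequality \eqref{eq:bdtphu} tuned through the specific $\gamma$ in \eqref{eq:3.19.1}.

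First, reduce to the case $T+a_\dagger\le\mu_0'$ with $\mu_0'$ as in \eqref{eq:choice}: if the size of the time-age rectangle exceeds $\mu_0'$, partition $(0,T)\times(0,a_\dagger)$ into finitely many subcells of diameter at most $\mu_0'$ and propagate uniqueness cell by cell starting from the terminal data $u(T,\cdot,\cdot)$ and $u(\cdot,a_\dagger,\cdot)$. In the reduced regime, fix arbitrary $0<t_1<t_2<t_3<T$ and $0<a_1<a_2<a_3<a_\dagger$, take the cutoff $\kappa$ from \eqref{eq:kappa}, and form $v=\kappa w\in P_{T,a_\dagger}^\Omega$. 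On the inner region $Q^\Omega_{(t_2,T)\times(a_2,a_\dagger)}$ the cutoff equals one, so $v=w$ and $\mathcal{A}v-v_t-v_a=-(F(\cdot;u_1)-F(\cdot;u_2))$, which is exactly where $(\text{A}_3)$ becomes usable.

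Next, apply \eqref{eq:3.3-1} to $v$ and split the left-hand side of the resulting inequality across the transition strip $(t_1,t_2)\times(a_1,a_2)$ and the inner rectangle $(t_2,T)\times(a_2,a_\dagger)$, arriving at \eqref{eq:3.13-1}. On the inner rectangle, estimate $\lambda^{-2m/k}(\mathcal{A}w-w_t-w_a)^2\le\lambda^{-2m/k}L_F^2|w|^{2\alpha}$ by \eqref{eq:bdtphu} with $X=|w|^2$, $\alpha_0=\alpha$, and $\gamma$ chosen as in \eqref{eq:3.19.1}; integrating over $\Omega$ and then over $(t_2,T)\times(a_2,a_\dagger)$, and using $|\lambda|\le \mu_0'+\eta$, produces \eqref{eq:moi}--\eqref{eq:3.13}. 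Substituting back into \eqref{eq:3.13-1}, the precise choice of $\mu_0'$ in \eqref{eq:choice} together with the smallness condition \eqref{eq:3.15} on $K$ ensures that the $\|\lambda^{-m/k-1}w\|^2$-contribution on the right can be absorbed into the left-hand side, leaving \eqref{eq:3.17} and, after replacing the weights by their uniform upper and lower bounds, the cleaner form \eqref{eq:3.18-1}.

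Finally, send $m\to\infty$ in \eqref{eq:3.18-1}. Because $t_2+a_2<t_3+a_3$, the ratio $(T+a_\dagger+\eta-t_2-a_2)/(T+a_\dagger+\eta-t_3-a_3)$ exceeds one, so its $(-2m/k)$-th power tends to zero by \eqref{limit}; the left-hand side therefore vanishes in the limit while the right-hand side bounds $\|w\|_{Q^\Omega_{(t_3,T)\times(a_3,a_\dagger)}}$, forcing $w\equiv 0$ there. Arbitrariness of $t_1<t_2<t_3$ and $a_1<a_2<a_3$ then gives $w\equiv 0$ on all of $Q_{T,a_\dagger}^\Omega$. The main obstacle in the argument is bookkeeping rather than any single inequality: the weight $\lambda^{-m/k}$ must be strong enough on the inner rectangle to dominate $|w|^{2\alpha}$ and yet weak enough that the $\|\nabla v\|^2$- and $\|w\|^2$-terms on the right can be absorbed into the left-hand side of Lemma \ref{lem:3.1-1}; matching these two demands is exactly what forces the peculiar choice \eqref{eq:choice} of $\mu_0'$ and the condition \eqref{eq:3.15} on $K$. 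Once those algebraic constraints are in place, the $m\to\infty$ limit closes the argument cleanly.
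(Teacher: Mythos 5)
Your proposal is correct and follows essentially the same route as the paper: reduction to $T+a_\dagger\le\mu_0'$, the cutoff $v=\kappa w\in P_{T,a_\dagger}^{\Omega}$, the Carleman estimate \eqref{eq:3.3-1} split as in \eqref{eq:3.13-1}, the H\"older bound from $(\text{A}_3)$ combined with \eqref{eq:bdtphu} and the choices \eqref{eq:3.15}, \eqref{eq:choice} to absorb the $\left\Vert \lambda^{-\frac{m}{k}-1}w\right\Vert^2$-term, and finally the limit \eqref{limit} as $m\to\infty$ to force $w\equiv0$. This matches the paper's argument step for step, so no further comparison is needed.
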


\begin{remark}
The presence of $k>0$ in the context of Lemma \ref{lem:3.1-1} implies that $m$ can be taken as a real positive number. As a corollary, the uniqueness result holds when the source
term $F$ is globally Lipschitz, i.e. $\alpha=1$.
When $F$ is locally Lipschitz, it can also be reduced to the global
case if we set
\[
\widehat{W}_{T,a_{\dagger}}^{\Omega}=C\left(\overline{Q_{T,a_{\dagger}}};H_{0}^{1}\left(\Omega\right)\cap L^{\infty}\left(\Omega\right)\right)\cap L^{2}\left(Q_{T,a_{\dagger}};H^{2}\left(\Omega\right)\right)\cap C^{1}\left(Q_{T,a_{\dagger}};H_{0}^{1}\left(\Omega\right)\right).
\]
This space implies that the population density is essentially bounded in space
and uniformly bounded in time and age. This further gives us an idea to obtain the uniqueness of solution when $\alpha>1$. In fact, if
we assign this boundedness to a constant $r>0$ and apply the elementary
inequality $\left|X\right|^{\alpha}\le\alpha r^{\alpha-1}\left|X\right|$ provided that $\left|X\right|\le r$ and $\alpha>1$, then we get
back the globally Lipschitz case.\\
\indent
Concerning the existence of the function $\kappa$ in \eqref{eq:kappa},
we can rely on the application of partitions of unity (cf. \cite[Proposition 2.25]{Lee03}).
It says that if $M$ is a smooth manifold with or without boundary,
then for any closed subset $A\subseteq M$ and any open subset $U$
containing $A$, there exists a smooth bump function for $A$ supported
in $U$. Accordingly, the existence of $\kappa$ is guaranteed by taking,
\[
A=\left[t_{2},T\right]\times\left[a_{2},a_{\dagger}\right],\;M=\overline{Q_{T,a_{\dagger}}}\;\text{and }U=\left\{ \left(t_{1},t_{2}\right)\times\left(a_{1},a_{2}\right)\right\} \cup A.
\]
\end{remark}

\subsection{Robin-type boundary condition}
Similar to the previous subsection, to prove uniqueness we consider $u_1$ and $u_2$ as two solutions (in some appropriate spaces) of \eqref{eq:P} and \eqref{Robin} and then denote $w = u_1-u_2$, which satisfies

\begin{equation}
\begin{cases}
w_{t}+w_{a}-\mathcal{A}w=F\left(t,a,x;u_1\right)-F\left(t,a,x;u_2\right) & \text{in }Q_{T,a_{\dagger}}^\Omega,\\
-\text{d}\left(t,a,x\right)\nabla w\left(t,a,x\right)\cdot\text{n}=S\left(u_1\right)-S\left(u_2\right) & \text{on }Q_{T,a_{\dagger}}^{\partial\Omega},\\
w\left(T,a,x\right)=0 & \text{in }\left(0,a_{\dagger}\right)\times\Omega,\\
w\left(t,a_{\dagger},x\right)=0 & \text{in }\left(0,T\right)\times\Omega.
\end{cases}\label{eq:Pnew-1}
\end{equation}

We set the following function spaces:
\begin{align}
& \widetilde{W}_{T,a_{\dagger}}^{\Omega}:=C\left(\overline{Q_{T,a_{\dagger}}};H^{1}\left(\Omega\right)\right)\cap L^{2}\left(Q_{T,a_{\dagger}};H^{2}\left(\Omega\right)\right)\cap C^{1}\left(Q_{T,a_{\dagger}};H^{1}\left(\Omega\right)\right), \label{W2}
\\
& \widetilde{P}_{T,a_{\dagger}}^{\Omega}:=\left\{ u\in \widetilde{W}_{T,a_{\dagger}}^{\Omega}:\left.u\right|_{t\in\left\{ 0,T\right\} }=0,\left.u\right|_{a\in\left\{ 0,a_{\dagger}\right\} }=0\right\} .\label{P2}
\end{align}
The space $\widetilde{P}_{T,a_{\dagger}}^{\Omega}$
does not now contain any boundary information on $\partial \Omega$ as in $P_{T,a_{\dagger}}^{\Omega}$ due to the Robin-type boundary condition \eqref{Robin} instead of the Dirichlet boundary condition \eqref{Dirichlet}. Thus, Lemma \ref{lem:3.1-1} cannot be applied
with any function in $\widetilde{P}_{T,a_{\dagger}}^{\Omega}$. However, we are able to formulate the following lemma with $w\in \widetilde{P}_{T,a_{\dagger}}^{\Omega}$ directly as a solution of
\eqref{eq:Pnew-1}.
Here, we also set $\lambda\left(t,a\right):=t-T+a-a_{\dagger}-\eta$
for $\eta>0$.
\begin{lemma}
\label{lem:3.2-1}Let $\beta \in (0,1)$ and assume $\left(\text{A}_{1}\right)$, $\left(\text{A}_{2}\right)$
and $\left(\text{A}_{4}\right)$-$\left(\text{A}_{6}\right)$ hold. Let $w\in \widetilde{P}_{T,a_{\dagger}}^{\Omega}$
be a solution of the problem \eqref{eq:Pnew-1}. Then, for any $m\in \mathbb{N}^{*}$, $k\in \mathbb{R}^{*}_{+}$ and $0<t_1 < t_2 < T$, $0< a_1 < a_2 < a_{\dagger}$, we have
\begin{align}\label{eq:3.23-1}
 & \left\Vert \lambda^{-\frac{m}{k}}\left(\mathcal{A}w-w_{t}-w_{a}\right)\right\Vert _{Q_{T,a_{\dagger}}^{\Omega}}^{2} \\
& \ge\frac{4m}{k}\left\Vert \lambda^{-\frac{m}{k}-1}w\right\Vert _{Q_{T,a_{\dagger}}^{\Omega}}^{2}-\overline{M}\left\Vert \lambda^{-\frac{m}{k}}\nabla w\right\Vert _{Q_{T,a_{\dagger}}^{\Omega}}^{2}-2\overline{m}\left\Vert \lambda^{-\frac{m}{k}}w\right\Vert _{Q_{T,a_{\dagger}}^{\partial\Omega}}^{2} \nonumber \\
& -\dfrac{\eta^2}{4C_0}\left\Vert \lambda^{-\frac{m}{k}-1}w\right\Vert _{Q_{T,a_{\dagger}}^{\partial\Omega}}^{2}
 -16 C_0\textbf{\emph{A}},\nonumber
\end{align}
where 
\begin{align}\label{AAA}
\textbf{\emph{A}} & = \beta L_{S}^{2}\frac{\eta^{\frac{2m}{k}\left(1-\frac{1}{\beta}\right)+2}\left\Vert \lambda^{-\frac{m}{k}}\right\Vert _{\left(t_{1},t_{2}\right)\times\left(a_{1},a_{2}\right)}^{\frac{2}{\beta}\left(\beta-1\right)}}{\left[\left(t_{2}-t_{1}\right)\left(a_{2}-a_{1}\right)\right]^{1-\frac{1}{\beta}}}\left\Vert \lambda^{-\frac{m}{k}}w\right\Vert _{Q_{T,a_{\dagger}}^{\partial\Omega}}^{2} \\
& +\left(\frac{m}{k\eta^{\beta+1}}\right)^{\frac{2}{1-\beta}}\left(1-\beta\right)\left|\partial\Omega\right|L_{S}^{2}\frac{Ta_{\dagger}\left\Vert \lambda^{-\frac{m}{k}}\right\Vert _{\left(t_{1},t_{2}\right)\times\left(a_{1},a_{2}\right)}^{2}}{\left(t_{2}-t_{1}\right)\left(a_{2}-a_{1}\right)}.\nonumber
\end{align}
Also, if $0<T+a_{\dagger}\le\mu$ for sufficiently small 
$\mu>0$, there exists $K>0$ such that
\begin{align}
 K\left\Vert \lambda^{-\frac{m}{k}}\left(\mathcal{A}w-w_{t}-w_{a}\right)\right\Vert _{Q_{T,a_{\dagger}}^{\Omega}}^{2} +K\textbf{\emph{A}} & \ge\left\Vert \lambda^{-\frac{m}{k}-1}w\right\Vert _{Q_{T,a_{\dagger}}^{\Omega}}^{2}+\frac{1}{2}\left\Vert \lambda^{-\frac{m}{k}}\nabla w\right\Vert _{Q_{T,a_{\dagger}}^{\Omega}}^{2},
\label{eq:3.24-1}
\end{align}
for sufficiently large $m$.
\end{lemma}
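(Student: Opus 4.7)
The plan is to emulate the proof of Lemma~\ref{lem:3.1-1}. Setting $z=\lambda^{-m/k}w$, the identity \eqref{eq:3.3} and the expansion \eqref{eq:3.3.1} still apply; the only difference is that, when I integrate $-2\langle z_t+z_a,\mathcal{A}z\rangle_{Q_{T,a_\dagger}^{\Omega}}$ by parts in $x$, the surface integrals on $\partial\Omega$ no longer vanish, since $z$ is not forced to be zero there. Writing $\Delta S := S(u_1)-S(u_2)$ and invoking the Robin condition in the form $\text{d}\nabla z\cdot \text{n}=-\lambda^{-m/k}\Delta S$, the extra surface contribution becomes $2\int_{Q_{T,a_\dagger}^{\partial\Omega}} \lambda^{-m/k}(z_t+z_a)\Delta S\, d\sigma_x\, da\, dt$. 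Using $z_t+z_a=\lambda^{-m/k}(w_t+w_a)-\frac{2m}{k}\lambda^{-m/k-1}w$ and an integration by parts in $(t,a)$ (legitimate because $w$ vanishes at $t\in\{0,T\}$ and $a\in\{0,a_\dagger\}$ by $w\in\widetilde{P}_{T,a_\dagger}^{\Omega}$), this collapses on $Q_{T,a_\dagger}^{\partial\Omega}$ to $\frac{4m}{k}\int \lambda^{-2m/k-1}w\Delta S-2\int \lambda^{-2m/k}w(\partial_t+\partial_a)\Delta S$. Assumption $(\text{A}_4)$ turns the second integral into the desired $-2\overline{m}\|\lambda^{-m/k}w\|^{2}_{Q_{T,a_\dagger}^{\partial\Omega}}$ contribution.

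For the residual $\frac{4m}{k}\int\lambda^{-2m/k-1}w\Delta S$ I split the integrand as $(\lambda^{-m/k-1}w)(\lambda^{-m/k}\Delta S)$ and apply Young's inequality with a weight calibrated so that the coefficient of $\|\lambda^{-m/k-1}w\|^{2}_{Q_{T,a_\dagger}^{\partial\Omega}}$ locks onto $\eta^{2}/(4C_0)$. The conjugate factor then bounds $\|\lambda^{-m/k}\Delta S\|^{2}_{Q_{T,a_\dagger}^{\partial\Omega}}$, which by $(\text{A}_6)$ is at most $L_S^{2}\int|\lambda|^{-2m/k}|w|^{2\beta}$. Lemma~\ref{eq:bdtphu} applied with $X=|w|^{2}$ and $\alpha_0=\beta$, together with a choice of $\gamma$ mirroring \eqref{eq:3.19.1} (with $\beta$ in place of $\alpha$) and the two-sided bound $\eta\le|\lambda|\le T+a_\dagger+\eta$, packages the remainder into exactly $16C_0\,\textbf{\emph{A}}$ as defined in \eqref{AAA}. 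Combining this with the bulk contribution $-\overline{M}\|\lambda^{-m/k}\nabla w\|^{2}$ (obtained via $(\text{A}_2)$ and the symmetry of $d_{ij}$ exactly as in the Dirichlet case) and the favourable term $\frac{4m}{k}\|\lambda^{-m/k-1}w\|^{2}_{Q_{T,a_\dagger}^{\Omega}}$ produced by \eqref{eq:3.3.3} yields \eqref{eq:3.23-1}.

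For \eqref{eq:3.24-1} I would repeat the passage from \eqref{eq:3.7} through \eqref{eq:3.3-1} of the Dirichlet case, now applied directly to $w$. The integration by parts of $-\langle \lambda^{-2m/k}w,\mathcal{A}w\rangle_{Q_{T,a_\dagger}^{\Omega}}$ generates an extra surface term $\int_{Q_{T,a_\dagger}^{\partial\Omega}}\lambda^{-2m/k}w\Delta S$, which is nonnegative by the monotonicity $(\text{A}_5)$ and hence may be dropped. The new boundary penalties $2\overline{m}\|\lambda^{-m/k}w\|^{2}_{Q_{T,a_\dagger}^{\partial\Omega}}$ and $\frac{\eta^{2}}{4C_0}\|\lambda^{-m/k-1}w\|^{2}_{Q_{T,a_\dagger}^{\partial\Omega}}$ inherited from \eqref{eq:3.23-1} are converted into interior quantities by Lemma~\ref{lem:trace}. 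The coefficient $\eta^{2}/(4C_0)$ has been tailored precisely so that, after multiplying through by $C_0$ and using $|\lambda|^{-1}\le\eta^{-1}$, the resulting gradient contribution is at most $\frac{1}{4}\|\lambda^{-m/k}\nabla w\|^{2}_{Q_{T,a_\dagger}^{\Omega}}$, which is absorbed by the target $\frac{1}{2}\|\lambda^{-m/k}\nabla w\|^{2}$; the accompanying mass contribution is swallowed by the large $m$ factor present in the $\frac{4m}{k}\|\lambda^{-m/k-1}w\|^{2}$ term. Taking $T+a_\dagger$ small and $m$ sufficiently large then closes the estimate.

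The main obstacle lies in the first step, where the Young weight and the free parameter $\gamma$ in Lemma~\ref{eq:bdtphu} must be tuned simultaneously so that the $\eta^{2}/(4C_0)$ prefactor on $\|\lambda^{-m/k-1}w\|^{2}_{\partial Q}$ and the $16C_0$ prefactor on $\textbf{\emph{A}}$ emerge together, with the exact $m,k,\eta$ dependence of $\textbf{\emph{A}}$ matching \eqref{AAA}; the restriction $\beta<1$ is essential here to keep the $(1-\beta)^{-1}$ denominators inside $\textbf{\emph{A}}$ finite. The second step is bookkeeping of the Dirichlet argument supplemented by Lemma~\ref{lem:trace}, with $(\text{A}_5)$ playing the purely qualitative role of enforcing the right sign on the additional boundary integral in the coercivity estimate.
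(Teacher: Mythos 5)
Your proposal is correct and follows essentially the same route as the paper: the same $(t,a)$ integration by parts of the surface term (splitting off the $(\text{A}_4)$ piece and the residual $\frac{4m}{k}\int\lambda^{-2m/k-1}w\,(S(u_1)-S(u_2))$), the same Young weighting calibrated to produce $\eta^{2}/(4C_0)$ and $16C_0\textbf{\emph{A}}$ via $(\text{A}_6)$ and the inequality \eqref{eq:bdtphu}, and the same trace-inequality/$(\text{A}_5)$ absorption argument for \eqref{eq:3.24-1}. The only detail left implicit is that the paper's $\gamma$ carries an extra factor $\left(\eta^{2}k/m\right)^{\frac{2}{\beta-1}}$ beyond the analogue of \eqref{eq:3.19.1}, which is exactly the tuning you anticipate as needed to make the $m,k,\eta$ dependence in \eqref{AAA} come out.
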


\begin{proof}
We can adapt the proof of Lemma \ref{lem:3.1-1}
to prove the estimates \eqref{eq:3.23-1} and \eqref{eq:3.24-1}.
Omitting some calculus, 
let $z=\lambda^{-\frac{m}{k}}w\in \widetilde{P}_{T,a_{\dagger}}^{\Omega}$, then (as in \eqref{eq:3.3.1})

\begin{align}
\left\Vert \lambda^{-\frac{m}{k}}\left(\mathcal{A}w-w_{t}-w_{a}\right)\right\Vert _{Q_{T,a_{\dagger}}^{\Omega}}^{2} & =\left\Vert z_{t}+z_{a}\right\Vert _{Q_{T,a_{\dagger}}^{\Omega}}^{2}-2\left\langle z_{t}+z_{a},\mathcal{A}z\right\rangle _{Q_{T,a_{\dagger}}^{\Omega}}\nonumber \\
 & +\dfrac{4m}{k}\left\langle \lambda^{-1}z,z_{t}+z_{a}\right\rangle _{Q_{T,a_{\dagger}}^{\Omega}} +\left\Vert \mathcal{A}z-\dfrac{2m}{k}\lambda^{-1}z\right\Vert _{Q_{T,a_{\dagger}}^{\Omega}}^{2}.\label{eq:3.26-1}
\end{align}
Using the Robin boundary condition
from \eqref{eq:Pnew-1}, we obtain
\begin{align}
-2\left\langle z_{t}+z_{a},\mathcal{A}z\right\rangle _{L^{2}\left(Q_{T,a_{\dagger}}^{\Omega}\right)} & =\underbrace{-2\int_{0}^{T}\int_{0}^{a_{\dagger}}\int_{\partial\Omega}\left(z_{t}+z_{a}\right)\text{d}\nabla z\cdot\text{n}d\sigma_{x}dadt}_{:=\mathcal{I}_{1}}\nonumber \\
 & +\underbrace{2\int_{0}^{T}\int_{0}^{a_{\dagger}}\int_{\Omega}\sum_{1\leq i,j\leq n}\left(\partial_{t}\partial_{x_{i}}z+\partial_{a}\partial_{x_{i}}z\right)d_{ij}\partial_{x_{j}}zdxdadt}_{:=\mathcal{I}_{2}}.\label{eq:3.27-1}
\end{align}
As in the derivation of \eqref{eq:3.3.2} and using $\left(\text{A}_{2}\right)$ we obtain
\begin{eqnarray}
\mathcal{I}_{2} =-\int_{0}^{T}\int_{0}^{a_{\dagger}}\int_{\Omega}\sum_{1\leq i,j\leq n}\partial_{x_{i}}z\left(\partial_{t}+\partial_{a}\right)\left(d_{ij}\right)
\partial_{x_{j}}zdxdadt \nonumber \\
\ge-\overline{M}\left\Vert \nabla z\right\Vert _{L^{2}\left(Q_{T,a_{\dagger}}^{\Omega}\right)}^{2}.\label{3.27-2}
\end{eqnarray}
To estimate $\mathcal{I}_{1}$, recall the Robin boundary
condition from \eqref{eq:Pnew-1} to get
\begin{align}
\mathcal{I}_{1} & =2\int_{0}^{T}\int_{0}^{a_{\dagger}}\int_{\partial\Omega}\left(z_{t}+z_{a}\right)\lambda^{-\frac{m}{k}}\left(S\left(u_1\right)-S\left(u_2\right)\right)d\sigma_{x}dadt\nonumber \\
 & =\underbrace{2\int_{0}^{T}\int_{0}^{a_{\dagger}}\int_{\partial\Omega}\left(\partial_{t}+\partial_{a}\right)\left[\lambda^{-\frac{m}{k}}z\left(S\left(u_1\right)-S\left(u_2\right)\right)\right]d\sigma_{x}dadt}_{:=\mathcal{I}_{3}}\nonumber \\
 & \underbrace{-2\int_{0}^{T}\int_{0}^{a_{\dagger}}\int_{\partial\Omega}\lambda^{-\frac{m}{k}}z\left(\partial_{t}+\partial_{a}\right)\left(S\left(u_1\right)-S\left(u_2\right)\right)d\sigma_{x}dadt}_{:=\mathcal{I}_{4}}\nonumber \\
 & \underbrace{+\frac{4m}{k}\int_{0}^{T}\int_{0}^{a_{\dagger}}\int_{\partial\Omega}\lambda^{-\frac{m}{k}-1}z\left(S\left(u_1\right)-S\left(u_2\right)\right)d\sigma_{x}dadt}_{:=\mathcal{I}_{5}}.\label{eq:3.28-1}
\end{align}

Notice that due to the zero conditions in the definition \eqref{P2} of $\widetilde{P}_{T,a_{\dagger}}^{\Omega}$,
the first term $\mathcal{I}_{3}$ of \eqref{eq:3.28-1} vanishes. Using $\left(\text{A}_{4}\right)$, we estimate
$\mathcal{I}_{4}$ by
\begin{align}
\mathcal{I}_{4} & =-2\int_{0}^{T}\int_{0}^{a_{\dagger}}\int_{\partial\Omega}\lambda^{-\frac{2m}{k}}\left(u_1-u_2\right)\left(\partial_{t}+\partial_{a}\right)\left(S\left(u_1\right)-S\left(u_2\right)\right)d\sigma_{x}dadt \nonumber \\
 & \ge-2\overline{m}\left\Vert \lambda^{-\frac{m}{k}}w\right\Vert _{Q_{T,a_{\dagger}}^{\partial\Omega}}^{2}.
\label{bdtI4}
\end{align}
By the back-substitution
$z=\lambda^{-\frac{m}{k}}w$, the term $\mathcal{I}_{5}$ can be estimated by

\begin{align}
\mathcal{I}_{5} \ge-\frac{2m}{k}\left(\frac{k\eta^2 }{8mC_0}\left\Vert \lambda^{-\frac{m}{k}-1}w\right\Vert _{Q_{T,a_{\dagger}}^{\partial\Omega}}^{2}
+ \frac{8mC_0}{k\eta^2 }\left\Vert \lambda^{-\frac{m}{k}}\left(S\left(u_1\right)-S\left(u_2\right)\right)\right\Vert _{Q_{T,a_{\dagger}}^{\partial\Omega}}^{2}\right).
\label{eq:bdt_{b}}
\end{align}
Apply $\left(\text{A}_{6}\right)$ and the inequality \eqref{eq:bdtphu} for $X=\left|w\right|^{2}\geq 0$, $\alpha_0 = \beta \in (0,1)$ and
\[
\gamma=\left[\left(t_{2}-t_{1}\right)\left(a_{2}-a_{1}\right)\right]^{-\frac{1}{\beta}}\left(\frac{\eta^2 k}{m}\right)^{\frac{2}{\beta - 1}}\lambda^{\frac{2m}{k\beta}}\left\Vert \lambda^{-\frac{m}{k}}\right\Vert _{\left(t_{1},t_{2}\right)\times\left(a_{1},a_{2}\right)}^{\frac{2}{\beta}}>0,
\]
for $0<t_1 < t_2 < T, 0< a_1 < a_2 < a_{\dagger}$, to get (using that $\lvert \lambda\rvert > \eta$), as in \eqref{eq:3.19},
\begin{align}
\frac{m^2 }{k^2\eta^2 }\left\Vert \lambda^{-\frac{m}{k}}\left(S\left(u_1\right)-S\left(u_2\right)\right)\right\Vert _{Q_{T,a_{\dagger}}^{\partial\Omega}}^{2} &\le \textbf{A} . 
\label{eq:bdt_b}
\end{align}

As in \eqref{eq:3.3.3} since $z\in \widetilde{P}_{T,a_{\dagger}}^{\Omega}$, we observe that
\begin{equation}
\dfrac{4m}{k}\left\langle \lambda^{-1}z,z_{t}+z_{a}\right\rangle _{Q_{T,a_{\dagger}}^{\Omega}} = \dfrac{4m}{k}\left\Vert \lambda^{-1}z\right\Vert _{Q_{T,a_{\dagger}}^{\Omega}}^{2}.\label{eq:3.32}
\end{equation}

We complete the proof of \eqref{eq:3.23-1} by grouping together \eqref{eq:3.26-1}, \eqref{3.27-2}, \eqref{bdtI4}-\eqref{eq:3.32}. 

It remains to prove the estimate \eqref{eq:3.24-1}. 
Using Green's formula we have  
\begin{align}\label{BBB}
 -\left\langle \lambda^{-\frac{2m}{k}}w,\mathcal{A}w-w_{t}-w_{a}\right\rangle _{Q_{T,a_{\dagger}}^{\Omega}}  &=\underbrace{\int_{0}^{T}\int_{0}^{a_{\dagger}}\int_{\partial\Omega}\lambda^{-\frac{2m}{k}}w\left(S\left(u_1\right)-S\left(u_2\right)\right)d\sigma_{x}dadt}_{:=\mathcal{I}_{6}}\nonumber \\  & 
 +\underbrace{\int_{0}^{T}\int_{0}^{a_{\dagger}}\int_{\Omega}\lambda^{-\frac{2m}{k}} \left( \text{d}\nabla w\right)\cdot \nabla w dxdadt}_{:=\mathcal{I}_{7}}\nonumber
 \\& +\underbrace{\int_{0}^{T}\int_{0}^{a_{\dagger}}\int_{\Omega}\lambda^{-\frac{2m}{k}}w\left(w_{t}+w_{a}\right)dxdadt}_{:=\mathcal{I}_{8}}.
\end{align}

Since $\left|\lambda\left(t,a\right)\right|\le T+a_{\dagger}+\eta$
for all $\left(t,a\right)\in\overline{Q_{T,a_{\dagger}}}$, $\mathcal{I}_{8}$ can be estimated using 
integration by parts, while for $\mathcal{I}_{6}$ and $\mathcal{I}_{7}$
we use $\left(\text{A}_{1}\right)$ and
$\left(\text{A}_{5}\right)$ to obtain
\begin{align}
-\left\langle \lambda^{-\frac{2m}{k}}w,\mathcal{A}w-w_{t}-w_{a}\right\rangle _{Q_{T,a_{\dagger}}^{\Omega}} &\ge\underline{c}\left\Vert \lambda^{-\frac{m}{k}}\nabla w\right\Vert _{Q_{T,a_{\dagger}}^{\Omega}}^{2}\nonumber\\
&-\frac{2m}{k}\left(T+a_{\dagger}+\eta\right)\left\Vert \lambda^{-\frac{m}{k}-1}w\right\Vert _{Q_{T,a_{\dagger}}^{\Omega}}^{2}.\label{eq:3.28}
\end{align}

Using Young's inequality
\begin{align*}
& \dfrac{1}{2} \left\Vert \lambda^{-\frac{m}{k}-1}w\right\Vert _{Q_{T,a_{\dagger}}^{\Omega}}^{2} + \dfrac{1}{2} \left\Vert \lambda^{-\frac{m}{k}+1}\left(\mathcal{A}w-w_{t}-w_{a}\right)\right\Vert _{Q_{T,a_{\dagger}}^{\Omega}}^{2} \\
 & \ge -\left\langle \lambda^{-\frac{2m}{k}}w,\mathcal{A}w-w_{t}-w_{a}\right\rangle _{Q_{T,a_{\dagger}}^{\Omega}}, 
\end{align*}
then \eqref{eq:3.28} yields (using also that $\left| \lambda \right| \leq T + a_\dagger +\eta$)

\begin{align}\label{eq:3.28-2-1}
& K_2 \left\Vert \lambda^{-\frac{m}{k}-1}w\right\Vert _{Q_{T,a_{\dagger}}^{\Omega}}^{2} + 
\dfrac{\left(T + a_\dagger + \eta\right)^2}{2} \left\Vert \lambda^{-\frac{m}{k}}\left(\mathcal{A}w-w_{t}-w_{a}\right)\right\Vert _{Q_{T,a_{\dagger}}^{\Omega}}^{2}  \\
& \ge \underline{c}\left\Vert \lambda^{-\frac{m}{k}}\nabla w\right\Vert _{Q_{T,a_{\dagger}}^{\Omega}}^{2} \nonumber
\end{align}
where $K_2 := \frac{1}{2} + \frac{2m}{k} \left(T+a_\dagger +\eta\right)$. Now using \eqref{eq:bdt_trace} in the third and fourth terms of the RHS of \eqref{eq:3.23-1}, we obtain (using also that $\left| \lambda\right| >\eta$)
\begin{align}\label{eq:3.28-3}
 & \left\Vert \lambda^{-\frac{m}{k}}\left(\mathcal{A}w-w_{t}-w_{a}\right)\right\Vert _{Q_{T,a_{\dagger}}^{\Omega}}^{2} \\
& \ge K_1\left\Vert \lambda^{-\frac{m}{k}-1}w\right\Vert _{Q_{T,a_{\dagger}}^{\Omega}}^{2}-\left(\overline{M} +2\overline{m}C_0+\frac{1}{4}\right)\left\Vert \lambda^{-\frac{m}{k}}\nabla w\right\Vert _{Q_{T,a_{\dagger}}^{\Omega}}^{2} -16 C_0\textbf{A},\nonumber
\end{align}
where $K_1 :=\frac{4m}{k} -2\overline{m}C_0 \left(T+a_\dagger +\eta\right)^2 -\frac{\eta^2}{4}$. Multiplying \eqref{eq:3.28-3} by $K_2 K^{-1}_1$ and applying \eqref{eq:3.28-2-1}, we obtain

\begin{align}
\left[\underline{c}-K_2 K_1^{-1} \left(\overline{M}+2C_0 \overline{m} +\frac{1}{4}\right)\right]\left\Vert \lambda^{-\frac{m}{k}}\nabla w\right\Vert _{Q_{T,a_{\dagger}}^{\Omega}}^{2}\le \mathcal{I}_{9}+\mathcal{I}_{10},
\label{eq:3.30}
\end{align}
which in line with the estimate \eqref{eq:3.10}. In \eqref{eq:3.30}, we have denoted 
\begin{align*}
\mathcal{I}_{9}&:=\left[K_2 K_{1}^{-1} + \dfrac{\left(T+a_{\dagger}+\eta\right)^2}{2} \right]\left\Vert \lambda^{-\frac{m}{k}}\left(\mathcal{A}w-w_{t}-w_{a}\right)\right\Vert _{Q_{T,a_{\dagger}}^{\Omega}}^{2},\\
\mathcal{I}_{10}&:=16 K_2 K_{1}^{-1}C_{0}\textbf{A}.
\end{align*}

Since $K_2 K_{1}^{-1} \to \frac{1}{2}(T + a_{\dagger} + \eta)$ as $m \to \infty$, if $0<T+a_\dagger \le \mu$ for sufficiently small $\mu >0$, then can choose $\mu_0 \ge \mu \ge T + a_{\dagger}>0$, $\eta_0\ge \eta > 0$ and $m_0 \le  m$ such that $K_1 > 0$ and $2K_{2}K_{1}^{-1}\left(\overline{M} + 2 C_0 \overline{m} +\frac{1}{4}\right) \le \underline{c}$. Then, for any $m\ge m_0$ denoting
\[
C_{3}:= \frac{2}{\underline{c}} \max\left\{K_2 K_{1}^{-1} + \dfrac{\left(\mu +\eta\right)^2}{2},16 K_2 K_{1}^{-1}C_{0}\right\},
\]
from \eqref{eq:3.28-3} and \eqref{eq:3.30} we obtain
\begin{align}\label{eq:3.31-1}
	& \left\Vert \lambda^{-\frac{m}{k}-1}w\right\Vert _{Q_{T,a_{\dagger}}^{\Omega}}^{2}
	+\left[1 - K_{1}^{-1}\left(\overline{M}+2C_{0}\overline{m} + \frac{1}{4}\right)\right]\left\Vert \lambda^{-\frac{m}{k}}\nabla w\right\Vert _{Q_{T,a_{\dagger}}^{\Omega}}^{2} \\
	& \le\left(K_{1}^{-1}+C_{3}\right)\left(\left\Vert \lambda^{-\frac{m}{k}}\left(\mathcal{A}w-w_{t}-w_{a}\right)\right\Vert _{Q_{T,a_{\dagger}}^{\Omega}}^{2} +\textbf{A}\right) \nonumber
\end{align}

We can now take $m_0$ sufficiently large, and $\mu_0$ and $\eta_0$ sufficiently small such that $K_{1}^{-1}\left(\overline{M}+2C_{0}\overline{m} + \frac{1}{4}\right) \le \frac{1}{2}$ for any $m \ge m_0$. Then, we conclude that \eqref{eq:3.24-1} holds for $m\ge m_0$ by choosing $K := K_3^{-1} + C_4$, 
where
\begin{align*}
&K_3 := \frac{4m_0}{k} - \frac{\eta^2}{4} - 2\overline{m} (\mu_0 + \eta_0)^2 C_0,\quad K_4 := \frac{1}{2} + \frac{2m_0}{k} \left(\mu_0 +\eta_0\right),\\
&C_{4}:= \frac{2}{\underline{c}} \max\left\{K_4 K_{3}^{-1} + \dfrac{\left(\mu_0 +\eta_0\right)^2}{2},16 K_4 K_{3}^{-1}C_{0}\right\},
\end{align*}
using that $K_4 K_3^{-1} \ge K_2 K_1^{-1}$ since the function $K_2 K_1^{-1}$ is monotonically decreasing, as a function of $m$.
\end{proof}
Remark that in deriving \eqref{eq:3.23-1} and \eqref{eq:3.24-1}, we have never used that $w$ satisfies the first equation in \eqref{eq:Pnew-1}. Let us now define
\[
\overline{\kappa}\left(t,a\right)=\begin{cases}
0 & \text{if }\left(t,a\right)\in Q_{T,a_{\dagger}}\backslash\left(\left\{ \left(t_{1},t_{2}\right)\times\left(a_{1},a_{2}\right)\right\} \cup\left[t_{2},T\right]\times\left[a_{2},a_{\dagger}\right]\right),\\
1 & \text{if }\left(t,a\right)\in\left\{ \left(t_{1},t_{2}\right)\times\left(a_{1},a_{2}\right)\right\} \cup\left[t_{2},T\right]\times\left[a_{2},a_{\dagger}\right],
\end{cases}
\]
and set $v:=\overline{\kappa}w\in \widetilde{P}_{T,a_{\dagger}}^{\Omega}$. Then, from the second equation of \eqref{eq:Pnew-1} we have
\begin{align*}
	& -\text{d}\left(t,a,x\right)\nabla v\left(t,a,x\right)\cdot\text{n}\\
	& =\begin{cases}
		S\left(u_{1}\right)-S\left(u_{2}\right)\text{ on }\left(\left(t_{1},t_{2}\right)\times\left(a_{1},a_{2}\right)\cup\left[t_{2},T\right]\times\left[a_{2},a_{\dagger}\right]\right)\times\partial\Omega,\\
		0\text{ on }\left(Q_{T,a_{\dagger}}\backslash\left(\left(t_{1},t_{2}\right)\times\left(a_{1},a_{2}\right)\cup\left[t_{2},T\right]\times\left[a_{2},a_{\dagger}\right]\right)\right)\times\partial\Omega.
	\end{cases}
\end{align*}
Then, it can be remarked that the inequality \eqref{eq:3.24-1} in Lemma \ref{lem:3.2-1} holds not only for $w$ but also for $v=\overline{\kappa}w$ since in \eqref{BBB} we only need to use that $\mathcal{I}_{6}\ge 0$, with the rest of the argument remaining the same. Then, from \eqref{eq:3.24-1} applied to $v$ we have (since $v=w$ in $\left(\left(t_{1},t_{2}\right)\times\left(a_{1},a_{2}\right)\cup\left[t_{2},T\right]\times\left[a_{2},a_{\dagger}\right]\right)\times\Omega$ and $v=0$ otherwise)
\begin{align}\label{eq:33}
	&K\left\Vert \lambda^{-\frac{m}{k}}\left(\mathcal{A}w-w_{t}-w_{a}\right)\right\Vert _{Q_{\left(t_{1},t_{2}\right)\times\left(a_{1},a_{2}\right)}^{\Omega}}^{2}  +K\left\Vert \lambda^{-\frac{m}{k}}\left(\mathcal{A}w-w_{t}-w_{a}\right)\right\Vert _{Q_{\left(t_{2},T\right)\times\left(a_{2},a_{\dagger}\right)}^{\Omega}}^{2} \\&
	+K\mathbf{A}_{1}+K\mathbf{A}_{2}\ge\left\Vert \lambda^{-\frac{m}{k}-1}w\right\Vert _{Q_{\left(t_{2},T\right)\times\left(a_{2},a_{\dagger}\right)}^{\Omega}}^{2}  +\frac{1}{2}\left\Vert \lambda^{-\frac{m}{k}}\nabla w\right\Vert _{Q_{\left(t_{2},T\right)\times\left(a_{2},a_{\dagger}\right)}^{\Omega}}^{2},\nonumber
\end{align}
where
\begin{align*}
	\mathbf{A}_{1} & =\beta L_{S}^{2}\frac{\eta^{\frac{2m}{k}\left(1-\frac{1}{\beta}\right)+2}\left\Vert \lambda^{-\frac{m}{k}}\right\Vert _{\left(t_{1},t_{2}\right)\times\left(a_{1},a_{2}\right)}^{\frac{2}{\beta}\left(\beta-1\right)}}{\left[\left(t_{2}-t_{1}\right)\left(a_{2}-a_{1}\right)\right]^{1-\frac{1}{\beta}}}\left\Vert \lambda^{-\frac{m}{k}}w\right\Vert _{Q_{\left(t_{1},t_{2}\right)\times\left(a_{1},a_{2}\right)}^{\partial\Omega}}^{2}\\
	& +\left(\frac{m}{k\eta^{\beta+1}}\right)^{\frac{2}{1-\beta}}\left(1-\beta\right)\left|\partial\Omega\right|L_{S}^{2}\frac{Ta_{\dagger}\left\Vert \lambda^{-\frac{m}{k}}\right\Vert _{\left(t_{1},t_{2}\right)\times\left(a_{1},a_{2}\right)}^{2}}{\left(t_{2}-t_{1}\right)\left(a_{2}-a_{1}\right)},
\end{align*}
\begin{align*}
	\mathbf{A}_{2} & =\beta L_{S}^{2}\frac{\eta^{\frac{2m}{k}\left(1-\frac{1}{\beta}\right)+2}\left\Vert \lambda^{-\frac{m}{k}}\right\Vert _{\left(t_{1},t_{2}\right)\times\left(a_{1},a_{2}\right)}^{\frac{2}{\beta}\left(\beta-1\right)}}{\left[\left(t_{2}-t_{1}\right)\left(a_{2}-a_{1}\right)\right]^{1-\frac{1}{\beta}}}\left\Vert \lambda^{-\frac{m}{k}}w\right\Vert _{Q_{\left(t_{2},T\right)\times\left(a_{2},a_{\dagger}\right)}^{\partial\Omega}}^{2}.
\end{align*}
Since $ \left|\lambda\right| \le T + a_{\dagger} + \eta\le \mu_{0}' + \eta$, we have that
\begin{align}\label{eq:66}
	\mathbf{A}_{1} & \le\beta L_{S}^{2}\left(\mu_{0}'+\eta\right)^{\frac{2m}{k}\left(\frac{1}{\beta}-1\right)+2}\eta^{\frac{2m}{k}\left(1-\frac{1}{\beta}\right)+2}\left\Vert \lambda^{-\frac{m}{k}-1}w\right\Vert _{Q_{\left(t_{1},t_{2}\right)\times\left(a_{1},a_{2}\right)}^{\partial\Omega}}^{2} \\
	& +\left(\frac{m}{k\eta^{\beta +1}}\right)^{\frac{2}{1-\beta}}\left(1-\beta\right)\left|\partial\Omega\right|L_{S}^{2}\frac{Ta_{\dagger}\left\Vert \lambda^{-\frac{m}{k}}\right\Vert _{\left(t_{1},t_{2}\right)\times\left(a_{1},a_{2}\right)}^{2}}{\left(t_{2}-t_{1}\right)\left(a_{2}-a_{1}\right)}.\nonumber
\end{align}
Also, as in \eqref{eq:3.13-3}, with $\alpha$ replaced by $\beta$, using \eqref{eq:bdt_trace} and $\left|\lambda\right|\le \mu_{0}' + \eta$, we have
\begin{align}\label{eq:44}
	\mathbf{A}_{2}
	& \le\beta L_{S}^{2}\left(\mu_{0}'+\eta\right)^{\frac{2m}{k}\left(\frac{1}{\beta}-1\right)+2}\eta^{\frac{2m}{k}\left(1-\frac{1}{\beta}\right)+2}C_{0}\left\Vert \lambda^{-\frac{m}{k}-1}w\right\Vert _{Q_{\left(t_{2},T\right)\times\left(a_{2},a_{\dagger}\right)}^{\Omega}}^{2} \\
	& +\beta L_{S}^{2}\left(\mu_{0}'+\eta\right)^{\frac{2m}{k}\left(\frac{1}{\beta}-1\right)}\eta^{\frac{2m}{k}\left(1-\frac{1}{\beta}\right)+2}C_0 \left\Vert \lambda^{-\frac{m}{k}}\nabla w\right\Vert _{Q_{\left(t_{2},T\right)\times\left(a_{2},a_{\dagger}\right)}^{\Omega}}^{2}.\nonumber
\end{align}
From \eqref{eq:moi}-\eqref{eq:3.13}, we have that

\begin{align}
& \left\Vert \lambda^{-\frac{m}{k}}\left(\mathcal{A}w-w_{t}-w_{a}\right)\right\Vert _{Q_{\left(t_{2},T\right)\times\left(a_{2},a_{\dagger}\right)}^{\Omega}}^{2}\label{eq:55}\\
& \le\alpha L_{F}^{2}\frac{\eta^{\frac{2m}{k}\left(1-\frac{1}{\alpha}\right)}\left\Vert \lambda^{-\frac{m}{k}}\right\Vert _{\left(t_{1},t_{2}\right)\times\left(a_{1},a_{2}\right)}^{\frac{2}{\alpha}\left(\alpha-1\right)}}{\left[\left(t_{2}-t_{1}\right)\left(a_{2}-a_{1}\right)\right]^{1-\frac{1}{\alpha}}}\left\Vert \lambda^{-\frac{m}{k}}w\right\Vert _{Q_{\left(t_{2},T\right)\times\left(a_{2},a_{\dagger}\right)}^{\Omega}}^{2}\nonumber \\
& +\left(1-\alpha\right)\left|\Omega\right|L_{F}^{2}\frac{Ta_{\dagger}\left\Vert \lambda^{-\frac{m}{k}}\right\Vert _{\left(t_{1},t_{2}\right)\times\left(a_{1},a_{2}\right)}^{2}}{\left(t_{2}-t_{1}\right)\left(a_{2}-a_{1}\right)}\nonumber \\
& \le\alpha L_{F}^{2}\left(\mu_{0}'+\eta\right)^{\frac{2m}{k}\left(\frac{1}{\alpha}-1\right)+2}\eta^{\frac{2m}{k}\left(1-\frac{1}{\alpha}\right)}\left\Vert \lambda^{-\frac{m}{k}-1}w\right\Vert _{Q_{\left(t_{2},T\right)\times\left(a_{2},a_{\dagger}\right)}^{\Omega}}^{2}\nonumber \\
& +\left(1-\alpha\right)\left|\Omega\right|L_{F}^{2}\frac{Ta_{\dagger}\left\Vert \lambda^{-\frac{m}{k}}\right\Vert _{\left(t_{1},t_{2}\right)\times\left(a_{1},a_{2}\right)}^{2}}{\left(t_{2}-t_{1}\right)\left(a_{2}-a_{1}\right)}.\nonumber 
\end{align}

Combining \eqref{eq:66}-\eqref{eq:44}, for sufficiently large $m\ge m_0$ it follows from \eqref{eq:33} (noticing also $\eta^2 \le (\mu_{0}' + \eta)^2$) that
\begin{align}	\label{eq:3.24-2}
	& K\left\Vert \lambda^{-\frac{m}{k}}\left(\mathcal{A}w-w_{t}-w_{a}\right)\right\Vert _{Q_{\left(t_{1},t_{2}\right)\times\left(a_{1},a_{2}\right)}^{\Omega}}^{2} \\
	& +K \beta L_{S}^{2}\left(\mu_{0}'+\eta\right)^{\frac{2m}{k}\left(\frac{1}{\beta}-1\right)+2}\eta^{\frac{2m}{k}\left(1-\frac{1}{\beta}\right)}\Bigg(\left\Vert \lambda^{-\frac{m}{k}-1}w\right\Vert _{Q_{\left(t_{1},t_{2}\right)\times\left(a_{1},a_{2}\right)}^{\partial\Omega}}^{2} \nonumber\\
	&\left.+C_0 \left\Vert \lambda^{-\frac{m}{k}}\nabla w\right\Vert _{Q_{\left(t_{2},T\right)\times\left(a_{2},a_{\dagger}\right)}^{\Omega}}^{2}\right)\nonumber \\
	& +K\left[\alpha L_{F}^{2}\left(\mu_{0}'+\eta\right)^{\frac{2m}{k}\left(\frac{1}{\alpha}-1\right)+2}\eta^{\frac{2m}{k}\left(1-\frac{1}{\alpha}\right)}\right.\nonumber \\
	& \left.+C_{0}\beta L_{S}^{2}\left(\mu_{0}'+\eta\right)^{\frac{2m}{k}\left(\frac{1}{\beta}-1\right)+2}\eta^{\frac{2m}{k}\left(1-\frac{1}{\beta}\right)}\right]\left\Vert \lambda^{-\frac{m}{k}-1}w\right\Vert _{Q_{\left(t_{2},T\right)\times\left(a_{2},a_{\dagger}\right)}^{\Omega}}^{2}\nonumber \\
	& +K\frac{Ta_{\dagger}}{\left(t_{2}-t_{1}\right)\left(a_{2}-a_{1}\right)}\Bigg[\left(1-\alpha\right)\left|\Omega\right|L_{F}^{2}\nonumber \\
	& \left.+\left(1-\beta\right)\left|\partial\Omega\right|L_{S}^{2}\left(\frac{m}{k\eta^{\beta+1}}\right)^{\frac{2}{1-\beta}}\right]\left\Vert \lambda^{-\frac{m}{k}}\right\Vert _{\left(t_{1},t_{2}\right)\times\left(a_{1},a_{2}\right)}^{2}\nonumber \\
	& \ge\left\Vert \lambda^{-\frac{m}{k}-1}w\right\Vert _{Q_{\left(t_{2},T\right)\times\left(a_{2},a_{\dagger}\right)}^{\Omega}}^{2}+\frac{1}{2}\left\Vert \lambda^{-\frac{m}{k}}\nabla w\right\Vert _{Q_{\left(t_{2},T\right)\times\left(a_{2},a_{\dagger}\right)}^{\Omega}}^{2}.\nonumber
\end{align}

Let $0< \eta_{1} \le \min\left\{1,\eta_0\right\}$, take $\eta = \eta_1$ and
\begin{align}
\mu_{0}'=\begin{cases}
\eta^{\frac{m\left(\frac{1}{\beta}-1\right)}{m\left(\frac{1}{\beta}-1\right)+k}}2^{\frac{k}{2m\left(\frac{1}{\beta}-1\right)+2k}}-\eta & \text{if }\alpha\ge\beta,\\
\eta^{\frac{m\left(\frac{1}{\alpha}-1\right)}{m\left(\frac{1}{\alpha}-1\right)+k}}2^{\frac{k}{2m\left(\frac{1}{\alpha}-1\right)+2k}}-\eta & \text{if }\alpha<\beta.
\end{cases}
\label{newchoice1}
\end{align}

Also, for sufficiently large $m_0$, and $\mu_0,\eta_0$ sufficiently small, choose
\begin{align}
0<K\le\frac{1}{8}\min\left\{ \frac{1}{\alpha L_{F}^{2}},\frac{1}{C_{0}\beta L_{S}^{2}},\frac{1}{\beta L_{S}^{2}}\right\} .
\label{newchoice}
\end{align}
With the choice \eqref{newchoice1}, we have that $\left(\mu_{0}'+\eta\right)^{\frac{2m}{k}\left(\frac{1}{\beta}-1\right)+2}\eta^{\frac{2m}{k}\left(1-\frac{1}{\beta}\right)}=2$. Further, with the choice \eqref{newchoice} we have that $2K\left(\alpha L_{F}^{2}+C_{0}\beta L_{S}^{2}\right)\le\frac{1}{2}$ and then \eqref{eq:3.24-2} yields
\begin{align}\label{lastone}
	& K\left\Vert \lambda^{-\frac{m}{k}}\left(\mathcal{A}w-w_{t}-w_{a}\right)\right\Vert _{Q_{\left(t_{1},t_{2}\right)\times\left(a_{1},a_{2}\right)}^{\Omega}}^{2}+\frac{1}{4}\left\Vert \lambda^{-\frac{m}{k}-1}w\right\Vert _{Q_{\left(t_{1},t_{2}\right)\times\left(a_{1},a_{2}\right)}^{\partial\Omega}}^{2}  \nonumber\\
	& +\frac{KTa_{\dagger}}{\left(t_{2}-t_{1}\right)\left(a_{2}-a_{1}\right)}\left[\left|\Omega\right|L_{F}^{2}+\left|\partial\Omega\right|L_{S}^{2}\left(\frac{m}{k\eta^{\beta+1}}\right)^{\frac{2}{1-\beta}}\right]\left\Vert \lambda^{-\frac{m}{k}}\right\Vert _{\left(t_{1},t_{2}\right)\times\left(a_{1},a_{2}\right)}^{2} \nonumber \\
	& \ge\frac{1}{2}\left\Vert \lambda^{-\frac{m}{k}-1}w\right\Vert _{Q_{\left(t_{2},T\right)\times\left(a_{2},a_{\dagger}\right)}^{\Omega}}^{2}+\frac{1}{4}\left\Vert \lambda^{-\frac{m}{k}}\nabla w\right\Vert _{Q_{\left(t_{2},T\right)\times\left(a_{2},a_{\dagger}\right)}^{\Omega}}^{2}.\nonumber
\end{align}

Observe that to complete the uniqueness result, we only need to mimic the way \eqref{eq:3.18-1} was obtained from \eqref{eq:3.17}, and the limit
\[
\left(\frac{m}{k}\right)^{\frac{2}{1-\beta}}\left(\frac{T+a_{\dagger}+\eta-t_{2}-a_{2}}{T+a_{\dagger}+\eta-t_{3}-a_{3}}\right)^{-\frac{2m}{k}}\to0\quad\text{as }m\to\infty
\]
for any $t_2 < t_3 < T$ and $a_2 < a_3 < a_{\dagger}$, and thus  $w\equiv 0$ for $(t,a,x)\in Q_{T,a_{\dagger}}^{\Omega}$.

In case $\beta = 1$, $\mathcal{I}_{5}$ in \eqref{eq:3.28-1} can be directly estimated from \eqref{eq:bdt_{b}}, using the Lipschitz continuity in $\left(\text{A}_{6}\right)$, without the need of employing the inequality \eqref{eq:bdtphu} to obtain \eqref{eq:bdt_b}. The rest of details in obtaining $w \equiv 0$ in case $\beta = 1$ are skipped.

In conclusion, we can state the following uniqueness theorem.

\begin{theorem} \label{thm:main2}
	Assume $\left(\text{A}_{1}\right)$-$\left(\text{A}_{6}\right)$ hold.
	Then, the problem \eqref{eq:P} satisfying the Robin boundary condition \eqref{Robin} admits no more than one solution in $\widetilde{W}_{T,a_{\dagger}}^{\Omega}$.
\end{theorem}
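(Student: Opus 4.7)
The plan is to combine the localization estimate \eqref{lastone}, already assembled for the difference $w=u_1-u_2$ of two putative solutions, with the Carleman-type decay argument used in the Dirichlet proof to reach \eqref{eq:3.18-1}, and then to close by sending $m\to\infty$.

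Concretely, I would first reduce to the case $T+a_\dagger\le \mu_0'$, since if $T+a_\dagger>\mu_0'$ the time--age rectangle $Q_{T,a_\dagger}$ can be covered by finitely many subrectangles of length at most $\mu_0'$, as in the remark after Theorem \ref{thm:main1}. Then, for $0<t_1<t_2<t_3<T$ and $0<a_1<a_2<a_3<a_\dagger$, I would use $|\lambda|\ge T+a_\dagger+\eta-t_2-a_2$ on $Q_{(t_1,t_2)\times(a_1,a_2)}$ to bound the negative powers of $\lambda$ on the left of \eqref{lastone} from above, and $|\lambda|\le T+a_\dagger+\eta-t_3-a_3$ on $Q_{(t_3,T)\times(a_3,a_\dagger)}$ to convert $\|\lambda^{-m/k-1}w\|^2$ into an $m$-dependent scalar times $\|w\|^2$. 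This yields an estimate of the shape
\[
\widetilde{C}\left(\frac{m}{k}\right)^{\frac{2}{1-\beta}}\left(\frac{T+a_\dagger+\eta-t_2-a_2}{T+a_\dagger+\eta-t_3-a_3}\right)^{-\frac{2m}{k}} \ge \|w\|_{Q_{(t_3,T)\times(a_3,a_\dagger)}^\Omega}^2,
\]
with $\widetilde{C}$ independent of $m$. Since $t_2+a_2<t_3+a_3$, the ratio in parentheses exceeds one, so its $-2m/k$-th power decays exponentially in $m$ and dominates the polynomial factor $(m/k)^{2/(1-\beta)}$ arising from the surface source. Letting $m\to\infty$ forces $w\equiv 0$ on $Q_{(t_3,T)\times(a_3,a_\dagger)}^\Omega$, and the arbitrariness of $t_i,a_i$ propagates this to all of $Q_{T,a_\dagger}^\Omega$.

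The Lipschitz case $\beta=1$ would be treated separately: as indicated just before the statement, the step from \eqref{eq:bdt_{b}} to \eqref{eq:bdt_b} is bypassed by applying the Lipschitz bound in $(\text{A}_{6})$ directly to $S(u_1)-S(u_2)$, so the polynomial $m$-factor disappears and the same decay argument applies without the extra technicality.

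The main obstacle, relative to the Dirichlet proof of Theorem \ref{thm:main1}, is that the Robin condition inserts extra surface contributions into \eqref{eq:3.24-1}, forcing one to simultaneously absorb a boundary norm and an interior gradient norm back into the left-hand side. The delicate tuning \eqref{newchoice1} of $\mu_0'$, which makes $(\mu_0'+\eta)^{\frac{2m}{k}(1/\beta-1)+2}\eta^{\frac{2m}{k}(1-1/\beta)}$ exactly equal to $2$, paired with the smallness constraint \eqref{newchoice} on $K$, is what ensures that only a polynomial $m$-growth survives on the left so the exponential Carleman factor still wins in the limit.
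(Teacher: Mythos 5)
Your proposal is correct and follows essentially the same route as the paper: starting from \eqref{lastone}, it mimics the passage from \eqref{eq:3.17} to \eqref{eq:3.18-1}, bounds $|\lambda|$ from below on $Q_{(t_1,t_2)\times(a_1,a_2)}$ and from above on $Q_{(t_3,T)\times(a_3,a_\dagger)}$, and concludes via the limit $\left(\frac{m}{k}\right)^{\frac{2}{1-\beta}}\left(\frac{T+a_\dagger+\eta-t_2-a_2}{T+a_\dagger+\eta-t_3-a_3}\right)^{-\frac{2m}{k}}\to 0$, with the subdivision argument for $T+a_\dagger>\mu_0'$ and the separate direct treatment of $\beta=1$ exactly as the paper indicates.
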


\section*{Acknowledgments}
This work is in commemoration of the first death anniversary of V.A.K{'}s father. V.A.K thanks Prof. Nguyen Huy Tuan for introducing him the ultra-parabolic problem.

\bibliographystyle{plain}
\bibliography{mybib}

\end{document}